\renewcommand{\algorithmiccomment}[1]{\bgroup\hfill//~#1\egroup}
\newtheorem{lemma}{Lemma}
\title{Resource-dependent process times in hybrid flexible flowshops}
\author[1,*]{Ioannis Avgerinos}
\author[1]{Ioannis Mourtos}
\author[1]{Dimitrios Papathanasiou}
\author[1,2]{Georgios Zois}
\affil[1]{\small ELTRUN Lab, Department of Management Science and Technology, Athens University of Economics and Business,\protect\\ \small Patision 76, 10434, Athens, Greece E-mail: {\tt \{iavgerinos, mourtos, dpapathanasiou, georzois\}@aueb.gr}}
\affil[2]{\small Optiscale, Athens 114 72, Greece}
\affil[*]{Corresponding Author} 
\date{}
\begin{document}
\maketitle

\begin{abstract}
The effect of resource allocation on manufacturing motivates us to examine a scheduling variant that is of practical significance yet remains overlooked. We examine a Hybrid Flexible Flowshop (HFFS), i.e., an environment where a set of jobs is scheduled across multiple stages (each stage having multiple identical machines) yet some jobs may skip some stages. In addition, we consider processing times that depend on the resources assigned to a job at each stage, transportation times between machines and limited-capacity buffers before and after each stage. We introduce a Constraint Programming (CP) formulation, which we then decompose through Logic-Based Benders Decomposition (LBBD). We tighten formulations by a set of makespan lower bounds, the strongest of which arises from a reduction to malleable scheduling. By modifying recent instance generators, we experiment with up to 400 jobs, 8 stages, and 10 parallel machines per stage. The results demonstrate competitive integrality gaps, highlighting the efficiency of our approach at scale and on an HFFS variant quite beyond the current literature.
\end{abstract}

\textbf{Keywords.} production, hybrid flexible flowshop, constraint programming, malleable scheduling, resource-dependent process times.

\hfill \break
\textbf{Acknowledgments.} This work has been supported by Horizon Europe as part of the the MODAPTO Research and Innovation Action [grant number 101091996] \url{https://modapto.eu/}]

\section{Introduction} \label{sec:intro}

The growing complexity of real-world scheduling problems requests scalable solutions that handle realistic conditions, including the Industry 5.0 shift towards human-centric production layouts. Under this scope, benchmark problems are now shaped to reflect practical settings and often the variation of process times with the number of workers allocated.

Flowshop scheduling~\cite{johnson1954optimal} is the problem where a set of jobs is processed by a series of machines. Studies on flowshop scheduling have gradually evolved in both focus and methodology~\cite{gupta2006flowshop}. A notable extension is the Hybrid Flexible Flowshop problem (HFFS)~\cite{ruiz2008modeling}, where each production stage comprises multiple (and typically identical) parallel machines. The HFFS differs from other flowshop variants~\cite{ruiz2010hybrid}, as it reflects the ability of jobs to bypass certain stages. To mirror practical shop scheduling constraints, studies introduce HFFS formulations that incorporate additional conditions, such as sequence-dependent setup times~\cite{jabbarizadeh2009hybrid}, transportation times between stages or machines of the same~\cite{fernandez2025transportation} or different factories~\cite{perezgonzalez24}, and limited-capacity buffers~\cite{vatikiotis2024makespan}. Commonly, the problem is studied under the objective of minimising the makespan, i.e., the maximum completion time across all operations in the system.

%Flowshop environments often address the ability of jobs to skip certain stages, under the scope of renewable resource constraints. 
In actual plant floors, however, processing times are resource-dependent~\cite{grigoriev2007machine}, meaning they vary based on the allocated resources and, most commonly, the assigned workforce. The utilised resources influence both the model constraints~\cite{behnamian2011hybrid} and the problem objective~\cite{mokhtari2011multi}. Despite the numerous variants addressed in the literature, the framework that incorporates both practical conditions (such as transportation times and limited-capacity buffers) and resource-dependent processing times has been largely overlooked, to the best of our knowledge. In fact, we have encountered such conditions in a gear-motor plant producing several hundreds of motors daily (see the acknowledgment). Therefore, motivated by both research gaps and business needs, our models and method account for machine-to-machine transportation times, as well as limited-capacity buffers and %processing times varying with . In addition to these constraints, the system includes human workers with varying productivity. Therefore,% 
process times varying by both the production stage and the number of the workers assigned to a job.
    
%Description of hybrid flexible flowshop, properties of related problems, applicability, gap in literature (i.e., processing times depended on assigned resources)
%Scope - Extensions
%Exact modeling for larger scale, Cumulative constraints with variable consumption

\textbf{Literature review. } We review the literature in brief and the interested reader may find further elaboration within most papers discussed here. The standard formulation of Hybrid Flowshops (HFS)~\cite{linn1999hybrid} considers a series of productions stages, each containing multiple (commonly identical) machines. This setting has been mainly approached through the implementation of metaheuristics~\cite{choong2011metaheuristic}. Several algorithms have been proposed, from typical search-based models ~\cite{wang2009tabu, mirsanei2011simulated} to population-based metaheuristics~\cite{ruiz2006genetic, %zheng2024cooperative
}. Such implementations are especially suitable to handle large instances of real-world cases.

With the inclusion of more case-realistic features, such as the flexibility factor, the selection of exact methods simplifies the way to express the additional constraints. A set of state-of-the-art Mixed-Integer Linear Programming (MILP) models is introduced by~\cite{naderi2014hybrid}, tackling datasets with up to 12 jobs and 4 stages. Under a different scope,~\cite{mollaei2019bi} propose a bi-objective MILP model for the HFFS with resource constraints, while another dual objective formulation, with makespan and tardiness criteria, is presented in~\cite{jungwattanakit2009comparison}. Extensions of typical MILP formulations of the HFFS are also established in~\cite{meng2020more} to incorporate real-world conditions. The testbeds used in the aforementioned studies prove that MILP models are not preferred for practical manufacturing conditions, which typically refer to large job instances. These cases require Constraint Programming (CP) formulations, which tackle real-world constraints more efficiently. The CP model presented in~\cite{icsik2023constraint}, tested in instances of up to 50 jobs and 4 stages, significantly outperforms the compared MILP in terms of both optimality gap and computational time. A CP implementation for the reentrant HFS is shown in~\cite{mlekusch2025dual}, which addresses both small (up to 50 jobs) and large (up to 100 jobs) testbeds. Highlighting the contemporary HFFS variant that accounts for transportation times between stages,~\cite{armstrong2021hybrid} introduce a state-of-the-art CP model which handles instances of up to 400 jobs and 8 production stages; this paper is particularly inspiring for our technical contribution.

Due to the scalability issues associated with exact methods, the use of decomposition approaches has recently gained traction. In a simple two-stage setting, ~\cite{tan2018logic} propose a Logic-Based Benders Decomposition (LBBD) model, distinguishing the implementation to an MILP master problem, and CP-based subproblems.~\cite{jiang2023decomposition} also adapt an MILP-CP model for a more complex HFFS environment. The inclusion of metaheuristics that decompose the established problem is shown in~\cite{wang2024decomposition}, where the proposed artificial bee colony algorithm is used to refine the obtained solutions through different modification phases. In a novel decomposition approach,~\cite{garraffa2025} present two types of heuristics that address different types of instances, for the HFFS with transportation times.

With research focus shifted towards settings that incorporate renewable resources \cite{jiang2023decomposition}, mainly referring to assigned workers \cite{gong2020energy}, it is important to adjust the HFFS frameworks according to the newly-surfaced constraints. A challenging feature comes from resource-based processing times~\cite{behnamian2011hybrid, %li2024dimension
}, meaning that the time needed to complete the job processing is influenced by workforce allocation in the manufacturing system.

\textbf{Contribution. }%The ever-changing landscape of manufacturing highlights the need to assess the given problem under the scope of practical constraints. 
 Our work is motivated by a large-scale and elaborate HFFS environment (i.e., with buffers, transportation times, limited resources and resource-dependent process times) and by the lack of methods that exploit the inherently decomposed nature of the problem under renewable constraints. We contribute in both directions by models, methods and computational evidence on problems of realistic size.

MILP models become inefficient as flowshop scheduling problems scale up, even with a relatively small number of jobs. In contrast, CP models effectively handle precedence and resource constraints. However, their inability to compute strong dual bounds limits the ability to assess the quality of solutions. To address large-scale datasets, we propose a LBBD algorithm composed entirely of CP models for both the master problem and the subproblem, integrated using logic Benders cuts. To mitigate CP’s limitations in producing strong lower bounds, we develop eight different dual bounds. These bounds not only tighten the search within the solution space but also significantly enhance the evaluation of solution quality by reducing the optimality gap. The most impactful bound is plausible after reducing the problem to a malleable scheduling relaxation that supports a broad class of practical, resource-dependent processing time functions.

\textbf{Outline. }Section \ref{sec:prel} elaborates on the HFFS with resource-dependent processing times, providing a description of the problem and the mathematical notation adopted throughout the paper. It also outlines the theoretical framework of LBBD and our decomposition approach. Section \ref{sec:model} presents the formulations of the CP model and the LBBD partitions, together with the employed lower bounds and the malleable scheduling reduction of the problem.
Section \ref{sec:experiments} describes the experimental work and introduces key metrics used to assess the performance of the methods.
The paper concludes with the remarks in Section \ref{sec:conclusions} and an Appendix containing the detailed experimental results.
\section{Preliminaries} \label{sec:prel}
    \subsection{Problem description}
    A set of jobs $J$ must be processed through a sequence of production stages $S$, each equipped with a set of identical machines $M$. Each machine is assigned to a specific stage $s$, where the subset $M_{s}\subset M$ denotes the machines available at stage $s$. Each job $j\in J$ may skip certain stages; hence, the subset $S_{j}\subseteq S$ represents the sequence of stages that are eligible for processing job $j$.
        
        Jobs are temporarily stored in limited-capacity buffers at the entry ($R^{-}_{m}$) and exit ($R^{+}_{m})$ of each machine $m\in M$, when they are not undergoing processing. Additionally, we account for transportation times between machines of subsequent stages, where $t_{mn}$ represents the time needed to transfer a job from machine $m$ of stage $s$ to machine $n$ of the next eligible stage $s^\prime$. To indicate the order of stages for each job, we use the indexed annotation $s^{i}_{j}$, denoting the $i^{\text{th}}$ stage of job $j\in J$, $i\in \{1, ..., |S_{j}|\}$. The environment operates with a number of workers $W$. For each stage $s$, all eligible jobs can be processed by a minimum ($w^{-}_{s}$) and a maximum ($w^{+}_{s}$) number of assigned workers. Each job is associated with a process time $p_{jsw}$, which is determined by both the stage $s$ and the number of occupied workers $w$, through a function $f_{js}(Q)$, $Q$ being a subset of workers, implying that $w = |Q|$. For brevity, we can replace the operations (i.e., the pairs of jobs and services $j, s$) with $j'$. Table \ref{tab:notation} summarises the mathematical annotation of the problem.

        \begin{table}[H]
            \centering
            \caption{Mathematical annotation}\label{tab:notation}
            \begin{tabular}{|l l|}
            \hline
            Sets &  \\
            \hline
            $J$ &   Set of jobs \\
            $S$ &   Set of stages \\
            $M$ &   Set of machines \\
            $S_{j}\subseteq S$  & Set of stages which are eligible for job $j\in J$ \\
            $M_{s}\subset M$ & Set of machines dedicated to stage $s\in S$ \\
            \hline
            Parameters &   \\
            \hline
            $R^{-}_{m}$ &  Buffer capacity on the entry of machine $m\in M$ \\
            $R^{+}_{m}$ &  Buffer capacity on the exit of machine $m\in M$ \\
            $t_{mn}$    & Transportation time between machines $m$ and $n$ \\
            $W$ & Total number of workers \\
            $w^-_{s}$ & Minimum number of workers required to process any job at stage $s\in S$ \\
            $w^+_{s}$ & Maximum number of workers allowed to process any job at stage $s\in S$ \\
            $p_{jsw}$ &  Processing time of job $j\in J$ on stage $s\in S_{j}$ if $w\in [w^{-}_{s}, w^{+}_{s}]$ workers are allocated \\
            $f_{js}(Q)$ &  Processing time function of job $j$ on stage $s$ if a subset of workers $Q$ is allocated \\
            $s^{i}_{j}\in S_{j}$ & The $i^{\text{th}}$ stage of job $j\in J$, $i\in \{1, ..., |S_{j}|\}$ \\
            \hline
        \end{tabular}
        \end{table}

        To illustrate the notation, consider the example shown in Figure \ref{fig:exHFFS}. We present an environment with three production stages, meaning $S = \{s_1, s_2, s_3\}$. Each stage comprises a varying number of machines. The aim is to schedule the defined set of jobs $J = \{j_1, j_2, j_3\}$. The edges between machines represent the transportation times required to transfer a job from one machine to another. Jobs can remain at entry (`In') or exit ('Out') buffers, while the machines are occupied by other jobs. Some jobs may be allowed to be transferred from $s_1$ to $s_3$, but we opt not to depict the respective edges for visualisation purposes.

        \begin{figure}[h]
            \centering
                \includegraphics{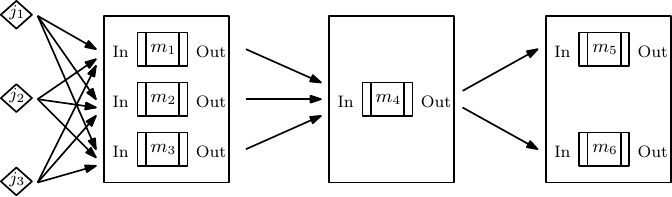}
                \caption{An indicative example of three jobs on three stages}
                \label{fig:exHFFS}
        \end{figure}

    \subsection{Logic-Based Benders Decomposition}
        Classical Benders Decomposition \cite{benders62} is one of the most widely applied partitioning methods for MILPs. It relies on duality-based optimality and feasibility cuts to decompose the original problem into a master problem (an Integer Linear Program) and a subproblem, which is a Linear Program consisting solely of continuous variables. This restriction to purely continuous subproblems, however, limits the applicability of the method in many optimisation problems where the subproblem naturally involves integer variables or requires alternative mathematical modelling frameworks.

        Hooker \cite{hooker03} extended the classical Benders decomposition into LBBD, a more flexible partitioning approach that guarantees convergence to optimality, regardless of the structure of the master and subproblem formulations. Given the efficiency of CP in handling renewable resources, thanks to the \texttt{Cumulative} predicate, \cite{hooker07} proposed LBBD variants for scheduling problems in which resource allocation is managed by a CP-formulated subproblem. A summary of the method follows:
    
        Let $\mathcal{P}$ be a minimisation problem of two groups of variables $x$, $y$ in domains $D_{x}$ and $D_{y}$: 
        \begin{equation}
            \mathcal{P}:\qquad min\{a(x) + b(y):C_{1}(x), C_{2}(y), C_{3}(x, y), x\in D_{x}, y\in D_{y}\}. \notag 
        \end{equation}
        The problem is subject to constraints $C_{1}(x)$, $C_{2}(y)$ and $C_{3}(x, y)$. 
        To partition $\mathcal{P},$ $\mathcal{M}$ is defined as
        \begin{equation}
            \mathcal{M}:\qquad min\{z:C_{1}(x), x\in D_{x}, z\geq a(x)\}. \notag 
        \end{equation}
                
        Assuming that $a(x)$ and $b(y)$ are non-negative cost functions, $z$ is a lower bound of the objective function of $\mathcal{P}.$ All constraints of $\mathcal{M}$ are also imposed to $\mathcal{P}$. Therefore, $\mathcal{M}$ is a \emph{relaxation} of $\mathcal{P}$, called the \emph{master problem}.
        
        Given the optimal solution $\bar{x}$ of $\mathcal{M}$ and the respective objective value $\bar{z}$, the following problem $\mathcal{S}$ computes a local optimum of $\mathcal{P}$:
        \begin{equation}
            \mathcal{S}:\qquad min\{a(\bar{x}) + b(y):C_{2}(y), C_{3}(\bar{x}, y), y\in D_{y}\}. \notag 
        \end{equation}
        Notably, $\mathcal{S}$ is a \emph{subproblem} of $\mathcal{P}$ that finds the optimal solution in the search space restricted by $\bar{x}$. If $C_{3}(\bar{x}, y)$ are satisfied for the given solution $\bar{x}$, the optimal objective value of $\mathcal{S},$ denoted as $\bar{\zeta},$ is an upper bound of the optimal objective value of $\mathcal{P}$. Hence, $\frac{\bar{\zeta} - \bar{z}}{\bar{\zeta}}$ defines a valid optimality gap.
        
        That is, LBBD employs an iterative exchange of knowledge between $\mathcal{M}$ and $\mathcal{S}$: $\mathcal{M}$ provides a lower bound of $\mathcal{P}$ and a partial solution $\bar{x}$, and, then, $\mathcal{S}$ returns an upper bound which is the local optimum of the restricted search space. All subsequent iterations must prevent the master problem from providing previous partial solutions, unless these are proved to be the globally optimal ones. This can be achieved by the generation of a valid \emph{optimality cut}, if the partial solution leads to a feasible solution of $\mathcal{S}$:
        \begin{equation}
            \text{if }x = \bar{x} \rightarrow z\geq \bar{\zeta}. \notag
        \end{equation}
        On the contrary, if $\mathcal{S}$ is infeasible for a partial solution $\bar{x}$, then a logic constraint called \emph{feasibility cut} is generated:
        \begin{equation}
            x \neq \bar{x}.\notag
        \end{equation}
        Traditionally, Benders cuts are linearised to fit within a MILP-formulated master problem. In contrast, this paper proposes an alternative approach in which the master problem is modeled using CP, thus the Benders cuts are generated in a logic form.
\section{Mathematical Modelling} \label{sec:model}
    \subsection{The CP model}
In recent decades, CP has become a leading approach for modelling complex scheduling problems. While MILP formulations are generally more effective at producing tight lower bounds, they suffer from poor scalability, limiting their practicality in real-world production environments. Position-based formulations \cite{ruiz14} have achieved low optimality gaps in flowshop scheduling problems, sometimes as part of a Benders decomposition scheme \cite{ruiz20}, by incorporating precedence constraints between stages without relying on big-M constraints, which are known to weaken bounds. However, additional complexities common in industrial settings, such as machine-to-machine transportation times, are difficult to model without compromising bound tightness. Similarly, incorporating renewable resource constraints (e.g., limited workforce availability) typically requires time-indexed variables, which lead to prohibitively large models, even for small problem instances.
        
By contrast, CP offers the necessary modelling flexibility, particularly through the use of interval variables and global cumulative constraints. This flexibility is especially valuable for problems that require optional interval variables; nonetheless, this comes at a significant cost in terms of lower bound quality. Recent advances in solver technology and modelling strategies have improved CP’s performance on standard scheduling benchmarks, including flowshop variants.

Given these considerations, the problem addressed in this work is formulated using a CP model that integrates all relevant constraints (i.e., limited workforce, capacitated buffers, precedence constraints) and the resource-dependent aspect of processing times:

    \begingroup
        \footnotesize
        \begin{flalign}
            \mathcal{CP}: & &&\notag &&\\
            \text{min } & C_{max} &&\notag &&\\
                        & C_{max}\geq \texttt{endOf}(\texttt{jobInterval}_{js}) &&\forall j\in J, s\in S_{j} \label{eq:c1} &&\\
                        & \texttt{alternative}(\texttt{jobInterval}_{js}, [\texttt{process}_{jm}|m\in M_{s}]) && \forall j\in J, s\in S \label{eq:c2} &&\\
                        &\text{if }\texttt{machineOf}_{js}=m\rightarrow \texttt{presenceOf}(\texttt{waitBefore}_{jm}) = 1 &&\forall j\in J, s\in S_{j}, m\in M_{s} \label{eq:c3} &&\\
                        &\text{if }\texttt{machineOf}_{js}=m\rightarrow \texttt{presenceOf}(\texttt{process}_{jm}) = 1 &&\forall j\in J, s\in S_{j}, m\in M_{s} \label{eq:c4} &&\\
                        &\text{if }\texttt{machineOf}_{js}=m\rightarrow \texttt{presenceOf}(\texttt{waitAfter}_{jm}) = 1 &&\forall j\in J, s\in S_{j}, m\in M_{s} \label{eq:c5} &&\\
                        & \texttt{startAtEnd}(\texttt{process}_{jm}, \texttt{waitBefore}_{jm}) &&\forall j\in J, m\in M \label{eq:c6} &&\\
                        & \texttt{startAtEnd}(\texttt{waitAfter}_{jm}, \texttt{process}_{jm}) && \forall j\in J, m\in M \label{eq:c7} &&\\
                        & \texttt{alternative}(\texttt{process}_{jm}, [\texttt{allocateWorkers}_{jmw}|w\in \{w^{-}_{s}, ..., w^{+}_{s}\}]) &&\forall j\in J, m\in M \label{eq:c9} &&\\
                        &\texttt{noOverlap}([\texttt{process}_{jm}|j\in J]) &&\forall m\in M \label{eq:c11} &&\\
                        &\sum_{j\in J}\texttt{pulse}(\texttt{waitBefore}_{jm}, 1)\leq R^{-}_{m} &&\forall m\in M \label{eq:c12} &&\\
                        &\sum_{j\in J}\texttt{pulse}(\texttt{waitAfter}_{jm}, 1)\leq R^{+}_{m} &&\forall m\in M \label{eq:c13} &&\\
                        &\sum_{j\in J}\sum_{m\in M}\sum_{w = w^{-}_{s}}^{w^{+}_{s}]}\texttt{pulse}(\texttt{allocateWorkers}_{jmw}, w) \leq W &&\label{eq:c14} &&\\
                        &\texttt{startAtEnd}(\texttt{waitBefore}_{jn}, \texttt{waitAfter}_{jm}, -t_{mn}) &&\forall j\in J, i\in \{2, ..., |S_{j}|\}, m\in M_{s^{i-1}_{j}}, n\in M_{s^{i}_{j}} \label{eq:c15} &&\\
                        &\texttt{endBeforeStart}(\texttt{jobInterval}_{js^{i-1}_{j}}, \texttt{jobInterval}_{js^{i}_{j}}) &&\forall j\in J, i\in \{2, ..., |S_{j}|\} \label{eq:c16} &&\\
                        & &&\notag &&\\
            &\texttt{jobInterval}_{js}: \texttt{interval variables} && \forall j\in J, s\in S_{j} \notag &&\\
            &\texttt{waitBefore}_{jm}: \texttt{interval variables} && \notag &&\\
            &\qquad \qquad \texttt{optional} = \texttt{True} &&\forall j\in J, m\in M \notag &&\\
            &\texttt{process}_{jm}: \texttt{interval variables} && \notag &&\\
            &\qquad \qquad \texttt{optional} = \texttt{True} &&\forall j\in J, m\in M \notag &&\\
            &\texttt{waitAfter}_{jm}: \texttt{interval variables} && \notag &&\\
            &\qquad \qquad \texttt{optional} = \texttt{True} &&\forall j\in J, m\in M \notag &&
            \end{flalign}
            \begin{flalign}
            &\texttt{allocateWorkers}_{jmw}: \texttt{interval variables} &&\notag &&\\
                            &\qquad \qquad \texttt{sizeOf}(\texttt{allocateWorkers}_{jmw}) = p_{jsw}&&\notag &&\\
                            &\qquad \qquad \texttt{optional} = \texttt{True} &&\forall j\in J, s\in S_{j}, m\in M_{s} \notag &&\\
            &\texttt{machineOf}_{js}\in M_{s} &&\forall j\in J, s\in S_{j} \notag &&\\
            &C_{max}\geq 0 && \notag &&
        \end{flalign}
    \endgroup

    The CP model incorporates multiple interval variables to handle various resource constraints. The variables $\texttt{jobInterval}_{js}$ are non-optional, as each job $j$ must be processed at all stages $s\in S_j$. To determine the machine assignment at each stage $s$, we define optional interval variables: $\texttt{waitBefore}_{jm}$, $\texttt{process}_{jm}$, and $\texttt{waitAfter}_{jm}$, representing the waiting time before processing, the processing itself, and the waiting time after processing on machine $m$, respectively. To model resource-dependent processing times, additional optional variables $\texttt{allocateWorkers}_{jmw}$ represent the processing interval of job $j$ on machine $m$ with $w$ allocated workers. The duration of $\texttt{allocateWorkers}_{jmw}$ is set to $p_{jsw}$, where $s$ is the stage corresponding to machine $m$.

    The objective is to minimise the makespan $C_{max}$, which must be greater than the completion time of any operation $(j, s)$ (see Constraint \ref{eq:c1}). Each interval $\texttt{jobInterval}_{js}$ is synchronised with exactly one $\texttt{process}_{jm}$ for some machine $m \in M_s$ using the $\texttt{alternative}$ predicate (Constraint \ref{eq:c2}). The variable $\texttt{machineOf}_{js}$ indicates the selected machine for stage $s$. If $\texttt{machineOf}_{js} = m$, then the corresponding intervals $\texttt{waitBefore}_{jm}$, $\texttt{process}_{jm}$, and $\texttt{waitAfter}_{jm}$ are present (Constraints \ref{eq:c3}–\ref{eq:c5}). The execution order among $\texttt{waitBefore}_{jm}$, $\texttt{process}_{jm}$, and $\texttt{waitAfter}_{jm}$ is enforced by Constraints \ref{eq:c6}–\ref{eq:c7}: processing starts after waiting at the entry, and finishes before waiting at the exit.

    According to (\ref{eq:c9}), each $\texttt{process}_{jm}$ interval is synchronised with exactly one $\texttt{allocateWorkers}_{jmw}$, ensuring the correct duration based on the number of workers. The $\texttt{noOverlap}$ predicate (constraint \ref{eq:c11}) guarantees that only one job is processed on a machine at any time. Pulse constraints (\ref{eq:c12}) and (\ref{eq:c13}) restrict the number of jobs waiting at machine entry and exit to the respective buffer capacities. Similarly, Constraint (\ref{eq:c14}) limits the total number of workers simultaneously in use to the maximum available $W$. To account for transportation between machines of consecutive stages, Constraint (\ref{eq:c15}) enforces that the time between the end of $\texttt{waitAfter}_{jm}$ and the start of $\texttt{waitBefore}_{jn}$ equals the transportation time $t_{mn}$. Finally, the overall stage order is imposed by Constraint (\ref{eq:c16}).
    
    As demonstrated in Section \ref{sec:experiments}, the model can generate feasible solutions for instances involving hundreds of jobs. However, scaling up significantly affects performance, since the quality of the solutions drops sharply as the number of jobs increases.

    \subsection{Relaxation of the problem} \label{sec:master}
\textbf{HFFS with transportation times. }To address the complexity of the full problem, we partition the problem into a relaxed master problem, where processing times do not vary with the number of workers and buffer capacities are unlimited, and a subproblem, which reintroduces the relaxed constraints on workers and buffers. Given a fixed machine sequence for each job from the relaxation (i.e., the master problem), the subproblem adjusts the schedule to yield the optimal among all solutions of the original problem with the given machine sequence per job.

To construct a relaxation of the original problem, the resource-dependent processing times are replaced by resource-independent lower bounds. Specifically, for each job $j$ on stage $s$, we define: $\bar{p}_{js} = \text{min}\{p_{jsw}|w\in \{w^{-}_{s}, ..., w^{+}_{s}\}\}$. Additionally, each job is assumed to occupy a fixed number of workers equal to the minimum required $w^{-}_{s}$ for its stage; this ensures that both the processing times and resource usage in the relaxation are lower bounds of their actual values. Also, as mentioned before, the constraints regarding the capacity of buffers at entry and exit points of the machines are neglected. Consequently, the above substitutions yield a valid relaxation $\mathcal{M}$.%, which underestimates the true processing times and resource requirements and, combined with the assumption of unlimited buffer capacity, provides a computationally efficient approximation of the original problem.
        
Thankfully, we can use a variation of the CP model of \cite{garraffa2025} for the HFFS with transportation times, which appears as currently the best-performing exact approach.
\begingroup
        \small
        \begin{flalign}
            \mathcal{M}:    & &&\notag &&\\
            \text{min }     &C_{max}    &&\notag &&\\
                            &C_{max}\geq \texttt{endOf}(\texttt{jobIntervalA}_{js}) &&\forall j\in J, s\in S_{j} \label{eq:m1} &&\\
                            &\texttt{alternative}(\texttt{jobIntervalA}_{js}, [\texttt{jobIntervalB}_{jm}|m\in M_{s}])       &&\forall j\in J, s\in S_{j} \label{eq:m2} &&\\
                            &\text{if }\texttt{machineOf}_{js} = m\rightarrow \texttt{presenceOf}(\texttt{jobIntervalB}_{jm}) = 1   &&\forall j\in J, s\in S_{j}, m\in M_{s} \label{eq:m3} &&\\
                            &\texttt{endBeforeStart}(\texttt{jobIntervalB}_{jm}, \texttt{jobIntervalB}_{jn}, t_{mn}) &&\forall j\in J, i\in \{2, ..., |S_{j}|\}, m\in M_{s^{i-1}_{j}}, n\in M_{s^{i}_{j}} \label{eq:m4} &&\\
                            &\texttt{endBeforeStart}(\texttt{jobIntervalA}_{js^{i-1}_{j}}, \texttt{jobIntervalA}_{js^{i}_{j}}) &&\forall j\in J, i\in \{2, ..., |S_{j}|\} \label{eq:m5} &&\\
                            &\texttt{noOverlap}([\texttt{jobIntervalB}_{jm}|j\in J])    &&\forall m\in M \label{eq:m6}&&\\
                            &\sum_{j\in J: s\in S_{j}}\texttt{pulse}(\texttt{jobIntervalA}_{js}, 1)\leq |M_{s}| &&\forall s\in S \label{eq:m7}&&\\
                            &\sum_{j\in J: s\in S_{j}}\texttt{pulse}(\texttt{jobIntervalA}_{js}, w^{-}_{s})\leq W && \label{eq:m8}&&\\
                            &\texttt{jobIntervalA}_{js}: \texttt{interval variables} &&\notag &&\\
                            &\qquad \qquad \texttt{sizeOf}(\texttt{jobIntervalA}_{js}) = \bar{p}_{js} &&\forall j\in J, s\in S_{j} \notag &&\\
                            &\texttt{jobIntervalB}_{jm}: \texttt{interval variables} &&\notag &&\\
                            &\qquad \qquad \texttt{optional} = \texttt{True} &&\forall j\in J, s\in S_{j}, m\in M_{s} \notag &&\\
                            &\texttt{machineOf}_{js}\in M_{s}   &&\forall j\in J, s\in S_{j}\notag &&
        \end{flalign}
        \endgroup

Model $\mathcal{M}$ uses a set of non-optional interval variables $\texttt{jobIntervalA}_{js}$, indicating the start and end times of job $j$ on stage $s$, given that the duration of the time interval is set to $\bar{p}_{js}$, i.e., the minimum processing time over all assigned workers. Optional time intervals $\texttt{jobIntervalB}_{jm}$ denote the start and end times of job $j$ on machine $m$. Variables $\texttt{machineOf}_{js}$ indicate the machine $m\in M_{s}$ on which stage $s$ of job $j$ is processed.
        
The objective function minimises the makespan, i.e., the maximum end time of all jobs (\ref{eq:m1}). Constraints (\ref{eq:m2}) use the predicate \texttt{alternative}, imposing that, for each stage $s$, at least one of variables $\texttt{jobIntervalB}_{jm}$ of machines $M_{s}$ is present. Variable $\texttt{jobIntervalA}_{js}$ is synchronised with the present variable $\texttt{jobIntervalB}_{jm}$ (i.e., they have the same start and end times). Conditional constraints (\ref{eq:m3}) indicate that the interval variable $\texttt{jobIntervalB}_{jm}$ of the assigned machine $m$ for each stage $s$ of job $j$ is present. Constraints (\ref{eq:m4}) indicate that job $j$ can start being processed at machine $n$, after the corresponding transportation time from machine $m$ of the precedent stage. The appropriate order of stages is imposed by Constraints (\ref{eq:m5}). Constraints (\ref{eq:m6}) use the \texttt{noOverlap} predicate to ensure that each machine can process no more than one job at the same time. The \texttt{pulse} predicate imposes a limited consumption of renewable resources at any time. Constraints (\ref{eq:m7}) are redundant, but evidently efficient: they ensure that the number of jobs that can be processed at the same stage simultaneously do not exceed the number of parallel machines of the stage. Last, Constraints (\ref{eq:m8}) ensure that no more than $W$ workers can be occupied at the same time, assuming that each job consumes the minimum number of workers $w^{-}_{s}$ at stage $s$.\\

\noindent \textbf{Lower bounds. }The use of optional interval variables in CP models is often associated with weaker lower bounds compared to integer programming formulations, as noted in \cite{naderi2023mixed}. To address this, we exploit structural properties of the problem to derive a set of rules inspired by classical makespan lower bounds, which are then used to impose a tighter bound on the variable $C_{max}$ in formulation $\mathcal{M}$.

A well-known lower bound on the makespan for scheduling on parallel identical machines is obtained by evenly distributing the total processing time across all machines: $C_{max}\cdot |M|\geq \sum_{j\in J}p_{j}$ where $p_{j}$ denotes the processing time of job $j$ across all machines, and $|M|$ is the number of parallel machines. This bound can be adapted within an HFFS framework by applying it to each stage individually.
        \begin{flalign}
            &C_{max}\geq \frac{\sum_{j\in J:s\in S_{j}}\bar{p}_{js}}{|M_{s}|} &&\forall s\in S \notag && (\text{LB}_{1})&& && && &&
        \end{flalign}
        To incorporate the flexibility of jobs in ($\text{LB}_{1}$), the total processing time at each stage $s$ includes only the jobs applicable to that stage. Furthermore, actual processing times are replaced by their relaxed counterparts $\bar{p}_{js}$, defined as the minimum processing time over all worker assignments for job $j$ at stage $s$.

        Another straightforward, yet often tighter, lower bound is based on the minimum possible completion time for each individual job. For a given job $j$, this minimum is computed as the sum of the relaxed processing times $\bar{p}_{js}$ over all eligible stages $s\in S_{j}$, plus the minimum transportation time $\bar{t}_{j}$ along the shortest feasible path through the flowshop environment. This path begins at a machine in the first eligible stage $s^{1}_{j}$, ends at a machine in the last stage $s^{|S_{j}|}_{j}$, and visits exactly one machine at each intermediate stage. The transportation time $\bar{t}_{j}$ can be efficiently obtained by solving a minimum-cost flow problem, formulated as the following Linear Program. 
        \begin{flalign}
            \text{(SP}_{j}\text{): } & &&\notag &&\\
            \text{min }& \bar{t}_{j} = \sum_{i=2}^{|S_{j}|}\sum_{m\in M_{s^{i-1}_{j}}}\sum_{n\in M_{s^{i}_{j}}}t_{mn}\cdot x_{mn} &&\notag &&\\
            &\sum_{m\in M_{s^{i-1}_{j}}}\sum_{n\in M_{s^{i}_{j}}}x_{mn} = 1 &&\forall i\in \{2, ..., |S_{j}|\} &&\notag &&\\
            &x_{mn}\geq 0 &&\forall i\in \{2, ..., |S_{j}|\}, m\in M_{s^{i-1}_{j}}, n\in M_{s^{|S_{j}|_{j}}} &&\notag &&
        \end{flalign}
        And subsequently:
        \begin{flalign}
            &C_{max}\geq \sum_{i=1}^{|S_{j}|}\bar{p}_{js^{i}_{j}} + \bar{t}_{j} &&\forall j\in J \notag && (\text{LB}_{2}) && && && &&
        \end{flalign}
        Combining ($\text{LB}_{1}$) and ($\text{LB}_{2}$) can yield a stronger lower bound on the makespan. Let $t^{s}_{j}$ denote the duration of the shortest path for job $j$ up to stage $s$, assuming $s\in S_{j}$. Similarly, let $p^{s-}_{j}$ represent the sum of relaxed processing times $\bar{p}_{js^{\prime}}$ for all eligible stages $s^{\prime}$ preceding $s$. It follows that stage $s$ cannot begin earlier than the minimum value of $p^{s-}_{j} + t^{s}_{j}$ over all jobs $j$ for which $s$ is an eligible stage. Moreover, by ($\text{LB}_{1}$), the minimum duration of stage $s$ is given by $\frac{\sum_{j\in J:s\in S_{j}}\bar{p}_{js}}{|M_{s}|}$. Therefore, the sum of the minimum start time and the minimum duration of stage $s$ provides a valid lower bound as follows.
        \begin{flalign}
            &C_{max}\geq \frac{\sum_{j\in J:s\in S_{j}}\bar{p}_{js}}{|M_{s}|} + \text{min}\Bigl\{p^{s-}_{j} + t^{s}_{j}|j\in J \text{ if } s\in S_{j}\Bigr\} &&\forall s\in S \notag && (\text{LB}_{3}) && && && &&
        \end{flalign}

Lower bounds for the minimisation of makespan on hybrid flowshop environments are presented by \cite{lee1994minimizing}. The validity of these bounds can be retained, as long as the flexibility of skipping stages is incorporated. We therefore consider partitions of $\mathcal{M}$ which consider only a pair of consecutive stages. Given a stage $s\in S$ and its consecutive stage $s+1$, let $j^{1}, j^{2}, ..., j^{n}$ be the jobs for which $s\in S_{j}$ and $s+1\in S_{j}$, in non-decreasing order of processing times on stage $s$.

Further inequalities are obtainable by reformulating the bounds of \cite{lee1994minimizing}, in which the interested reader may find a detailed presentation. Let for convenience $s^{\prime}=s+1$. If $|M_{s}|\geq |M_{s^{\prime}}|$, we obtain the following bounds on the optimal value of $\mathcal{M}$:
 \begin{flalign}
            &C_{max}\geq \frac{p_{j^{|M_{s^{\prime}}|}s} + p_{j^{n}s^{\prime}}}{|M_{s}|} && s\in S, s^{\prime} = s+1 \notag && (\text{LB}_{4}) && && && &&\\
            &C_{max}\geq \frac{p_{j^{|M_{s}|}s} + (|M_{s}| - |M_{s^{\prime}}|)\cdot p_{j^{1}s^{\prime}} + p_{j^{n}s}}{|M_{s}|} && s\in S, s^{\prime} = s+1 \notag && (\text{LB}_{5}) && && && &&
        \end{flalign}
        Reversely, if $|M_{s}|< |M_{s^{\prime}}|$:
        \begin{flalign}
            &C_{max}\geq \frac{p_{j^{|M_{s^{\prime}}|s}} + (|M_{s^{\prime}}|-|M_{s}|)\cdot p_{j^{1}s} + p_{j^{n}s^{\prime}}}{|M_{s^{\prime}}|} && s\in S, s^{\prime} = s+1 \notag && (\text{LB}_{6}) && && && &&\\
            &C_{max}\geq \frac{p_{j^{|M_{s}|}s^{\prime}} + p_{j^{n}s}}{|M_{s}|} && s\in S, s^{\prime} = s+1 \notag && (\text{LB}_{7}) && && && &&
        \end{flalign}

    \subsection{Subproblem}
The values of the variables $\texttt{machineOf}_{js}$ define the machines that process job $j$ at each stage $s$. We can now consider a set containing all these values, i.e., a set $J^*=\{j^*, j \in J\}$. For each job $j$, $j^{*}$ is a tuple representing the sequence of machines which are assigned to process $j$, i.e., $j^{*} = (m^{1}_{j}, m^{2}_{j}, ..., m^{|S_{j}|}_{j})$, where the superscript refers to the stage in the set of eligible stages $S_{j}$ (i.e., $m^{i}_{j} = \texttt{machineOf}_{js^{i}_{j}}$). With a slight abuse of notation we write $j\in J, m\in j^{*}$, although $j^*$ is an ordered subset of the set of machines $M$, to say that for a job $j$, machine $m$ shows up in the sequence of machines processing that job. 

The subsequent subproblem is responsible for assigning workers to jobs, taking into account the job-to-machine assignments defined by $J^*$, as well as the capacity constraints of the buffers at the entry and exit points of each machine:

        \begingroup
        \footnotesize
        \begin{flalign}
            \mathcal{S}:    & &&\notag &&\\
            \text{min }     &C_{max} &&\notag &&\\
            &C_{max}\geq \texttt{endOf}(\texttt{waitAfter}_{jm})&& \forall j\in J, m\in j^{*} \label{eq:s1} &&\\
            &\texttt{startAtEnd}(\texttt{process}_{jm}, \texttt{waitBefore}_{jm}) && \forall j\in J, m\in j^{*} \label{eq:s2} &&\\
            &\texttt{startAtEnd}(\texttt{waitAfter}_{jm}, \texttt{process}_{jm}) && \forall j\in J, m\in j^{*} \label{eq:s3} &&\\
            &\texttt{startAtEnd}(\texttt{waitBefore}_{jm^{i}_{j}}, \texttt{waitAfter}_{jm^{i-1}_{j}}, -t_{m^{i-1}_{j}m^{i}_{j}}) &&\forall j\in J, i\in \{2, ..., |S_{j}|\} \label{eq:s4} &&\\
            &\texttt{alternative}(\texttt{process}_{jm^{i}_{j}}, [\texttt{allocateWorkers}_{jm^{i}_{j}w}|w\in \{w^{-}_{s^{i}_{j}}, ..., w^{+}_{s^{i}_{j}}\}]) &&\forall j\in J, i\in \{1, ..., |S_{j}|\} \label{eq:s5} &&\\
            &\texttt{sizeOf}(\texttt{process}_{jm}) = \texttt{element}(p_{js^{i}_{j}}, \texttt{workersOf}_{jm}) &&\forall j\in J, i\in \{1, ..., |S_{j}|\} \label{eq:s6} &&\\
            &\text{if }\texttt{workersOf}_{jm} = w \rightarrow \texttt{presenceOf}(\texttt{allocateWorkers}_{jm^{i}_{j}w}) = 1 &&\forall j\in J, i\in \{1, ..., |S_{j}|\}, w\in \{w^{-}_{s^{i}_{j}}, ..., w^{+}_{s^{i}_{j}}\} \label{eq:s7} &&\\
            &\texttt{noOverlap}([\texttt{process}_{jm}|j\in J: m\in j^{*}]) &&\forall m\in M \label{eq:s8} &&\\
            &\sum_{j\in J:m\in j^{*}}\texttt{pulse}(\texttt{waitBefore}_{jm}, 1)\leq R^{-}_{m}&& \forall m\in M \label{eq:s9} &&\\
            &\sum_{j\in J:m\in j^{*}}\texttt{pulse}(\texttt{waitAfter}_{jm}, 1)\leq R^{+}_{m}&& \forall m\in M \label{eq:s10} &&\\
            &\sum_{j\in J}\sum_{i\in \{1, ..., |S_{j}|\}}\sum_{w = w^{-}_{s^{i}_{j}}}^{w^{+}_{s^{i}_{j}}}\texttt{pulse}(\texttt{allocateWorkers}_{jm^{i}_{j}w}, w)\leq W &&\label{eq:s11}&&\\
            & &&\notag &&\\
            &\texttt{waitBefore}_{jm}: \texttt{interval variables} &&\forall j\in J, m\in j^{*} \notag &&\\
            &\texttt{process}_{jm}: \texttt{interval variables} &&\forall j\in J, m\in j^{*} \notag &&\\
            &\texttt{waitAfter}_{jm}: \texttt{interval variables} &&\forall j\in J, m\in j^{*} \notag &&\\
            &\texttt{allocateWorkers}_{jm^{i}_{j}w}: \texttt{interval variables} &&\notag &&\\
                            &\qquad \qquad \texttt{sizeOf}(\texttt{allocateWorkers}_{jm^{i}_{j}w}) = p_{js^{i}_{j}w}&&\notag &&\\
                            &\qquad \qquad \texttt{optional} = \texttt{True} &&\forall j\in J, i\in \{1, ..., |S_{j}|\}, w\in \{w^{-}_{s^{i}_{j}}, ..., w^{+}_{s^{i}_{j}}\} \notag &&\\
            &\texttt{workersOf}_{jm^{i}_{j}}\in \{w^{-}_{s^{i}_{j}}, ..., w^{+}_{s^{i}_{j}}\} &&\forall j\in J, i\in \{1, ..., |S_{j}|\} \notag && 
        \end{flalign}
        \endgroup

        Each operation of a job is connected with three interval variables, indicating the waiting in an entry-point buffer, the processing on the machine and the waiting in an exit-point buffer, namely $\texttt{waitBefore}_{jm}$, $\texttt{process}_{jm}$ and $\texttt{waitAfter}_{jm}$ respectively for each $j$ and $m\in j^{*}$. Constraints (\ref{eq:s2}) and (\ref{eq:s3}) impose the appropriate order of the three variables. Since $\texttt{waitAfter}$ is always the last time interval of an operation, the value of makespan is determined by the maximum end time of these variables (\ref{eq:s1}).

        Constraints (\ref{eq:s4}) ensure that the transition between consecutive machines respects the exact transportation time. Specifically, the predicate $\texttt{startAtEnd}$ enforces that the start time of the second interval variable is equal to the end time of the first, minus a fixed offset corresponding to the transportation time between the two machines. It is essential that the time difference between two consecutive machines equals the transportation time - not merely exceeds it - so that any additional delay is explicitly captured by the variables $\texttt{waitBefore}$ and $\texttt{waitAfter}$, which are subject to resource constraints.

        Optional interval variables $\texttt{allocateWorkers}_{jmw}$ are present if $w$ workers are allocated to the operation $j$ being processed on $m$. The size of variables $\texttt{allocateWorkers}_{jmw}$ is set to the resource-dependent processing time $p_{jsw}$ of the corresponding stage $s$. Constraints (\ref{eq:s5}) ensure that the present interval variable $\texttt{allocateWorkers}_{jmw}$ and $\texttt{process}_{jm}$ are synchronised (i.e., they have the same start and end times). To enhance the performance of the CP model, we also add a set of integer variables $\texttt{workersOf}_{jm}$, indicating the number of assigned workers of $j$ on $m$. Constraints (\ref{eq:s6}) use the predicate $\texttt{element}$ which returns the value of a list, indicating the index of the list by a variable. In this case, (\ref{eq:s6}) define the duration of $\texttt{process}_{jm}$ as the processing time $p_{jsw}$ of the corresponding stage $s$, for which $w = \texttt{workersOf}_{jm}$. By (\ref{eq:s7}), if the value of $\texttt{workersOf}_{jm}$ is set to $w$, the corresponding interval variable $\texttt{allocateWorkers}_{jmw}$ is present.

        Constraints (\ref{eq:s8}) ensure that each machine can process one job at the time. \texttt{Pulse} constraints (\ref{eq:s9}) and (\ref{eq:s10}) satisfy the capacities of buffers on the entry and exit points of each machine respectively. Finally, constraints (\ref{eq:s11}) restrict the number of simultaneously occupied workers to $W$.

    \subsection{Lower bound via a malleable scheduling relaxation} \label{sec:malleable}
Beyond the lower bounds $\text{LB}_{1}$–$\text{LB}_{7}$, functions modelling resource-dependent processing times may allow for a reduction of a relaxed problem to a general case of \emph{malleable scheduling}, under realisting assumptions.

Let $J^{*}$ denote the set of all operations, defined as $J^{*} = \{(j, s^{i}_{j})|j\in J, i\in \{1, ..., |S_{j}|\}$. Also, let $W^{*} = \{1, ..., W\}$ represent the set of workers, modeled as parallel resources. If we disregard the precedence constraints between operations of the same job, treating each operation as an independent task, and temporarily ignore buffer limitations, the relaxed problem  becomes a Parallel Machine Scheduling Problem (PMSP), where $W^{*}$ is the set of parallel machines. Indeed, these assumptions relax the original problem, as some of its constraints are disregarded without adding new ones.

To capture the \emph{parallelisability} of operations across multiple workers, the PMSP reformulation must allow job splitting. However, all partitions of a given operation must be processed simultaneously, sharing identical start and end times across the assigned workers. An example of such parallelisable operations is illustrated in Figure \ref{fig:malleable}.

        \begin{figure}[h]
            \centering
            \includegraphics{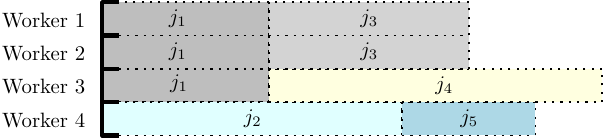}
            \caption{Parallelisability of operations to workers}
            \label{fig:malleable}
        \end{figure}

But then, we notice that the proposed reformulation is equivalent to the \emph{malleable job scheduling} problem \cite{Du89}, in which each job can be assigned to an arbitrary subset of the available machines and is processed non-preemptively and in unison, i.e., with a common start and completion time across them. Most of the literature on malleable job scheduling considers identical machines, where the processing time of a job is a function only of the number of allocated machines. This assumption also holds in our setting:  each operation $j^{\prime}\in J^{*}$ is associated with a processing time function $f_{j^{\prime}}(Q)$, where $Q\subseteq W^{*}$ denotes the subset of workers assigned to that operation. Specifically, $f_{j^{\prime}}(Q) = p_{j^{\prime}|Q|}$ for any operation $j^{\prime}\in J^{*}$ and any subset $Q\in W^{*}$ of size $|Q|$. Under this framework, it is commonly assumed that $f_{j^{\prime}}(Q)$ has the following two properties:

        \noindent\textbf{P.1. Non-increasing processing times.} Processing time $f_{j^{\prime}}(Q)$ is non-increasing in the number of allocated workers $|Q|$.\\
        \textbf{P.2. Monotone work.} The product $|Q|\cdot f_{j^{\prime}}(Q)$ is non-decreasing.

Although the second property looks less straightforward than the first, P.2 captures the fact that the total amount of work required to complete a job does not decrease through parallelisation: to the contrary, parallel execution often entails additional communication and coordination overhead. This observation appears to be strongly applicable and is also verified by the gear-motor plant motivating our study.

Now, inspired by the LP suggested by \cite{fotakis25malleable}, we consider the following relaxation of the malleable job scheduling problem:
        \begin{flalign}
            \text{(LP): }& &&\notag &&\\
            \text{min }&C_{max} &&\notag &&\\
            &\sum_{w\in W^{*}}x_{j^{\prime}w}\geq 1 &&\forall j^{\prime}\in J^{*} \label{eq:lp1}&&\\
            &C_{max}\geq \sum_{j^{\prime}\in J^{*}}p_{j^{\prime}1}\cdot x_{j^{\prime}w} &&\forall w\in W^{*} \label{eq:lp2} &&\\
            &C_{max}\geq p_{js^{i}_{j}w^{+}_{s}} &&\forall j\in J, i\in \{1, ..., |S_{j}|\} \label{eq:lp3} &&\\
            &x_{j^{\prime}w}\geq 0 &&\forall j\in J^{*}, w\in W^{*} \notag &&\\
            &C_{max}\geq 0 &&\notag &&
        \end{flalign}

        \begin{lemma}
            The optimal solution of (LP) provides a lower bound on the makespan of any feasible solution to the malleable job scheduling problem. \label{lem:proof}
        \end{lemma}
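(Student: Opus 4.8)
The plan is to establish the claim by showing that the optimal value of (LP) is at most the makespan of \emph{every} feasible solution of the malleable job scheduling problem; since (LP) is a minimisation, this is exactly the assertion. To this end I would fix an arbitrary feasible malleable schedule and let $C$ be its makespan. In this schedule each operation $j^{\prime}\in J^{*}$ is assigned some subset of workers $Q_{j^{\prime}}\subseteq W^{*}$ with $|Q_{j^{\prime}}|\ge 1$, and is processed non-preemptively and in unison on all workers of $Q_{j^{\prime}}$ during a single time interval of length $f_{j^{\prime}}(Q_{j^{\prime}})=p_{j^{\prime}|Q_{j^{\prime}}|}$ contained in $[0,C]$. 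From this schedule I build a candidate point of (LP) by setting $C_{max}=C$ and $x_{j^{\prime}w}=1/|Q_{j^{\prime}}|$ for $w\in Q_{j^{\prime}}$, and $x_{j^{\prime}w}=0$ otherwise. It then remains to verify the three families of constraints.

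Constraint (\ref{eq:lp1}) holds with equality, because $\sum_{w\in W^{*}}x_{j^{\prime}w}=|Q_{j^{\prime}}|\cdot\tfrac{1}{|Q_{j^{\prime}}|}=1$. Constraint (\ref{eq:lp3}) follows from property P.1: since $f_{j^{\prime}}$ is non-increasing in the number of workers and $|Q_{j^{\prime}}|\le w^{+}_{s}$, the length $p_{j^{\prime}|Q_{j^{\prime}}|}$ of operation $j^{\prime}=(j,s^{i}_{j})$ is at least $p_{js^{i}_{j}w^{+}_{s}}$, and as this interval is contained in $[0,C]$ we get $C\ge p_{js^{i}_{j}w^{+}_{s}}$.

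The core of the argument is constraint (\ref{eq:lp2}). Fix a worker $w\in W^{*}$ and let $A_{w}=\{j^{\prime}\in J^{*}:w\in Q_{j^{\prime}}\}$ be the operations it processes. On worker $w$ these operations occupy pairwise disjoint sub-intervals of $[0,C]$, hence $\sum_{j^{\prime}\in A_{w}}p_{j^{\prime}|Q_{j^{\prime}}|}\le C$. Property P.2 (monotone work) gives $|Q_{j^{\prime}}|\cdot p_{j^{\prime}|Q_{j^{\prime}}|}\ge 1\cdot p_{j^{\prime}1}$, i.e.\ $p_{j^{\prime}1}/|Q_{j^{\prime}}|\le p_{j^{\prime}|Q_{j^{\prime}}|}$ for every operation. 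Summing over $A_{w}$,
\[
\sum_{j^{\prime}\in J^{*}}p_{j^{\prime}1}\,x_{j^{\prime}w}=\sum_{j^{\prime}\in A_{w}}\frac{p_{j^{\prime}1}}{|Q_{j^{\prime}}|}\le\sum_{j^{\prime}\in A_{w}}p_{j^{\prime}|Q_{j^{\prime}}|}\le C=C_{max},
\]
which is precisely (\ref{eq:lp2}). Thus $(x,C_{max})$ is feasible for (LP), so the optimal value of (LP) is at most $C$; since the schedule was arbitrary, this optimal value is a lower bound on the makespan of any feasible malleable schedule.

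I expect the main obstacle to be the step $p_{j^{\prime}1}/|Q_{j^{\prime}}|\le p_{j^{\prime}|Q_{j^{\prime}}|}$, i.e.\ translating the LP coefficient $p_{j^{\prime}1}$ (a single-worker processing time) into a bound on the true interval length $p_{j^{\prime}|Q_{j^{\prime}}|}$: this is exactly where property P.2 is indispensable, and care is needed to apply it in the correct direction (total work non-decreasing in $|Q|$) and to note that $|Q_{j^{\prime}}|\ge 1$ so the division is legitimate. The remaining verifications are routine bookkeeping on the per-worker disjoint intervals.
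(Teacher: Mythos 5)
Your proof is correct and follows essentially the same route as the paper's: build an LP-feasible point from an arbitrary malleable schedule via the fractional assignment $x_{j^{\prime}w}=1/|Q_{j^{\prime}}|$, use the disjointness of each worker's processing intervals together with property P.2 (in the form $p_{j^{\prime}1}/|Q_{j^{\prime}}|\le p_{j^{\prime}|Q_{j^{\prime}}|}$) to verify the load constraints, and P.1 for the max-split constraint. If anything, your verification that $C_{max}=C$ itself satisfies all three constraint families is stated more cleanly than the paper's detour through the maximum-load worker.
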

        \begin{proof}
            Constraints (\ref{eq:lp1}) ensure that each operation is fully assigned to workers, i.e., the total fraction of an operation assigned across all workers equals one.

            Property P.1 guarantees that Constraint (\ref{eq:lp3}) holds: when an operation $(j, s^{i}_{j})$ is split among multiple workers, its processing time decreases as the number of assigned workers increases, reaching its minimum when the operation is split among the maximum allowed number $w^{+}_{s}$. Hence, the makespan of any feasible schedule must be at least as large as the processing time corresponding to the maximum possible split of any operation.

            By property P.2, the product $|Q| \cdot f_{j'}(Q)$ is non-decreasing with $|Q|$, for any subset of workers $Q$, where $|Q|\geq 1$ denotes the number of workers assigned to operation $j'$. This implies that:
            \begin{equation}
                p_{j'|Q|} \geq \frac{p_{j'1}}{|Q|} \label{eq-work}
            \end{equation}
            
            \medskip
            
            Let us now consider a feasible schedule $\texttt{S}$ of the malleable scheduling problem, where each operation $j^{\prime}$ is assigned to a subset of workers $Q_{j^{\prime}}$ and let $\texttt{S}_w$ be the subset of operations assigned to each worker $w$, and $C^{\texttt{S}}_{max}$ be the makespan of $\texttt{S}$. It must hold that the total load of each worker is bounded above by $C^{\texttt{S}}_{max}$:
             \begin{flalign}
                &C^{\texttt{S}}_{max} \geq \sum_{j' \in \texttt{S}_w}p_{j'|Q_{j^{\prime}}|} && \forall w\in W^*  && && && && &&\label{eq-work1}
            \end{flalign}
            A feasible solution to (LP), denoted by $\bar{\texttt{S}}$, can be defined by setting, for each operation $j^{\prime}\in J^{*}$ and each worker $w\in Q_{j^{\prime}}$ in $\texttt{S}$:
            \[
            \bar{x}_{j'w} =
            \begin{cases}
            \frac{1}{|Q_{j^{\prime}}|}, & \text{if } x_{j^{\prime}w} = 1, \\
            0, & \text{otherwise}.
            \end{cases}
            \]
            
          By substituting solution $\bar{x}_{j'w}$ to Constraint (\ref{eq:lp2}), the total load of each worker $w$ in $\bar{\texttt{S}}$ becomes equal to $\sum_{j' \in \texttt{S}_{w}} \frac{p_{j'1}}{|Q_{j^{\prime}}|}$, and less or equal to the makespan, $C_{max}$, of $\bar{\texttt{S}}$. Note that, for a worker $w^{\prime}$, where the maximum load is achieved, $C_{max} = \sum_{j^{\prime}\in \bar{\texttt{S}}_{w^{\prime}}} \frac{p_{j'1}}{|Q_{j^{\prime}}|}$, where $\bar{\texttt{S}}_{w^\prime}$ is the subset of operations assigned to worker $w^{\prime}$ in $\bar{S}$. Applying (\ref{eq-work}), combined with (\ref{eq-work1}), we yield that: 
           \begin{flalign}
            &C^{\texttt{S}}_{max} \geq \sum_{j'\in \texttt{S}_{w^{\prime}}} p_{j'|Q_{j^{\prime}}|} \geq C_{max}, && \notag && && && && &&
        \end{flalign} 

        %    completion time $C_{w}$ of worker $w$ is $S$ satisfies:
         %   \begin{equation}
          %      C_{w} \geq \sum_{j' \in J^{*}} p_{j'1} \cdot x_{j'w} 
           %     \qquad \forall w \in W^{*} \notag
            %\end{equation}
            
       %     Since the makespan $C_{\max}$ is the maximum completion time among all workers, it follows that:
        %    \begin{equation}
         %       C_{\max} \ge \sum_{j' \in J^{*}} p_{j'1} \cdot x_{j'w}
          %      \qquad \forall w \in W^{*} \notag
           % \end{equation}
      %      This corresponds exactly to constraint (\ref{eq:lp2}).

   %   As shown above, for any operation $j^{\prime}$, the processing time is $p_{j'|Q_{j^{\prime}}|} \ge \frac{p_{j'1}}{|Q_{j^{\prime}}|}$. 
      
       Let $C^*_{max}$ optimal objective value of the (LP). Since $C_{max}$ is the value of a feasible solution to (LP), it should hold that:
          \begin{flalign}
            &C^{\texttt{S}}_{max}\geq C^{*}_{max}, && \notag && (\text{LB}_{8})&& && && &&
        \end{flalign}  
        for any feasible solution $\texttt{S}$ to the malleable job scheduling problem.
        \end{proof}
        Note that $C_{max}^{*}$ of (LP) constitutes the eighth and final lower bound on the makespan. 
    
    \subsection{CP-oriented LBBD}
        Employing a CP-formulated subproblem within a LBBD algorithm is a common approach in exact decomposition methods. This design is particularly well-suited for scheduling problems with renewable resources, where CP can efficiently exploit the \texttt{Cumulative} predicate to handle resource usage over time. In contrast, using a CP formulation as the master problem is typically avoided, since CP solvers are generally less effective at producing tight lower bounds, which is a critical requirement for ensuring rapid convergence and small optimality gaps in LBBD. However, the relaxation defined by $\mathcal{M}$, strengthened by lower bounds $(\text{LB}_{1})-(\text{LB}_{7})$ and the (LP) relaxation of the reduced malleable scheduling problem, results in a strong and efficient master problem formulation using this time only CP.% This motivates an alternative LBBD scheme where $\mathcal{M}$ serves as the master problem and the subproblem $\mathcal{S}$ is responsible for generating feasible solutions.

As observed, the subproblem receives as input a fixed assignment of jobs to machines, but without specifying their processing order. In other words, the hybrid flexibility of the original problem, where a job can be assigned to any eligible machine at each stage, is restricted in the subproblem: each stage of a job has a dedicated machine by the master problem, while the exact scheduling is determined from scratch in the subproblem. To ensure progress in the LBBD algorithm, a valid Benders cut must prevent the master problem from reassigning all jobs to the same machines as in a previously explored solution, unless no improved solution (i.e., one with a better lower bound than the current best objective value) can be found. This guarantees that the master explores new assignments unless repeating one is provably necessary.

        For a set of job-machine sequences $J^*$, in which each $j^{*} = (m^{1}_{j}, m^{2}_{j}, ..., m^{|S_{j}|}_{j})$ is the order of machines which process job $j$, and $\zeta$ is the corresponding minimum value of makespan, as computed by the subproblem, the following straightforward optimality cut is constructed:

        \begin{flalign}
            &\text{if }\sum_{j^{*}\in J^*}\sum_{i\in \{1, ..., |S_{j}|\}}(\texttt{machineOf}_{js^{i}_{j}} = m^{i}_{j}) = \sum_{j\in J}|S_{j}|\rightarrow C_{max}\geq \zeta && \label{eq:benders-cut}   
        \end{flalign}
or in other words, if all jobs are processed at the same machines, the value of makespan is set to the objective value of the subsequent subproblem. Algorithm \ref{alg:lbbd} presents this iterative procedure.

        \begin{algorithm}[H]
            \caption{CP-oriented Logic-Based Benders Decomposition}
            \label{alg:lbbd}
            \begin{algorithmic}[1]
                \State Set $\text{LB} = 0, \text{UB} = \infty, k = 0$ as the lower bound, the upper bound and the number of iteration;
                \State Solve $\text{(LP)}\rightarrow \text{LB}$
                \While{$\text{LB} < \text{UB}$}
                    \State Solve $\mathcal{M}\rightarrow \text{LB}, J^*$, $J^*$ being the sequences of machines per job;
                    \State Solve $\mathcal{S}$ for $J^*\rightarrow \zeta^{k}$ the objective value at iteration $k$;
                    \If{$\zeta^{k} < \text{UB}$}
                        \State $\zeta^{k}\rightarrow \text{UB}$;
                    \EndIf
                    \State Add Benders cut (\ref{eq:benders-cut}) to $\mathcal{M}$;
                    \State $k\rightarrow k + 1$;
                \EndWhile
                \State Terminate;
            \end{algorithmic}
        \end{algorithm}

\section{Computational experience} \label{sec:experiments}
    All experiments run on a server equipped with 32 AMD Ryzen Threadripper PRO 5955WX 16-core processors and 32 GB of RAM, running Ubuntu 22.04.5 LTS. All mathematical models (LPs for the lower bounds and CP models) are solved by employing the \emph{CPLEX 22.1.1} optimiser. The LPs have been formulated using \emph{Pyomo 6.7.0}. For the complete $\mathcal{CP}$ model, the master problem and the subproblem, we have also used the 
    DOCplex module in \emph{Python 3.10.12}.

    For the $\mathcal{CP}$ model, a time limit of 30 minutes is imposed. In the case of the LBBD algorithm, a 5-minute time limit is set for the master problem, given that larger datasets may not converge to optimality quickly. To ensure a fair comparison between the two methods, the overall execution time for the LBBD algorithm is also capped at 30 minutes. While this time limit may appear short, it is sufficient to highlight the comparative performance of the methods. Empirical observations indicate that the convergence is slower for larger time limits, from which no additional meaningful insights can be drawn.  
    
    \subsection{Applied testbeds}
        We perform two distinct experimental setups to assess the performance of the methods under different testbeds. For the first group of experiments, we use the datasets provided by \cite{armstrong2021hybrid}, which include instances with 20, 50, 100, 200, and 400 jobs. These testbeds are based on a problem variant closely related to ours, as they incorporate hybrid job-to-machine flexibility across stages and account for transportation times between machines of consecutive stages. Moreover, the high degree of machine parallelism (10 machines per stage) poses additional challenges for the LBBD scheme, since the Benders cuts defined in (\ref{eq:benders-cut}) can be easily overcome by assigning jobs to alternative machines.
        
        Each one of the eight stages has 10 parallel machines. Stages 4 and 8 are skipped by some jobs. Since the original problem in \cite{armstrong2021hybrid} does not account for restricted buffers, we introduced additional parameters to adapt the testbeds to our problem requirements. For all machines, the number of buffers at the entry and exit points is drawn from a discrete uniform distribution between 1 and 5. This rule excludes machines that belong to the initial or final stages of any job's processing; in those cases, the number of entry or exit buffers is set to $|J|$ - effectively infinite in practice. In all datasets, we consider a total of 20 workers. Each job can be processed by 1, 2, or 3 workers.

        For the second group of experiments, we generate 30 datasets based on the random distributions proposed by \cite{naderi2014hybrid}. Compared to the first group, these datasets have significantly fewer stages and machines per stage. Consequently, the number of buffers and available workers is also reduced, while the range of processing times remains similar for both groups.

        Each generated instance includes 20, 50, 100, 200, or 400 jobs, with the number of stages varying between 2 and 4, as suggested by \cite{naderi2014hybrid}. The nominal processing times (i.e., for one worker) follow an integer distribution between 2 and 15 time units. The total number of available workers is set to 8 for all datasets, regardless of the number of jobs. Each job can be processed by 1, 2, or 3 workers.

        For each combination of job count (20-400) and stage count (2-4), we create two variants: two machines at each stage, and a random number of machines per stage, between 1 and 3. Each job has a 20\% probability of skipping a stage, enforcing processing for at least one stage. For each machine, the number of entry and exit buffers is randomly chosen between 1 and 3. Additionally, for each machine pair $m, n$ where $n$ follows $m$, transportation time is drawn uniformly from 1 to 9 units, as in \cite{armstrong2021hybrid}, and multiplied by the stage difference (e.g., if $m$ is in stage 1 and $n$ in stage 3, the random value is multiplied by 2). In total, 30 datasets are generated, corresponding to the combinations of five job counts, three stage counts, and two machine allocation variants (fixed at two machines per stage or random between one and three).
        
         Since none of the aforementioned generators consider processing times which are dependent on the number of allocated workers, we assume that processing times are obtained by a function $f_{j'}(Q) = \frac{p_{j'1}}{|Q|}$, i.e., the division of the nominal processing time of operation $j'$ by the number of allocated workers $|Q|$. Function $f_{j'}(\cdot)$ has been selected as it satisfies the two properties P.1 and P.2 for which the relaxation of the malleable job scheduling problem can be directly applied. More precisely:

        \begin{lemma}
            Function $f_{j^\prime}(Q) = \frac{p_{j^{\prime}1}}{|Q|}$ is non-increasing on the number of allocated workers $|Q|$. \label{lem:1}
        \end{lemma}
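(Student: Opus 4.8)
The plan is to verify the claim by a direct computation, exploiting that $f_{j^{\prime}}(Q)$ depends on $Q$ only through its cardinality $|Q|$, so it suffices to check that the map $k\mapsto p_{j^{\prime}1}/k$ is non-increasing over the positive integers $k$ in the admissible range $\{w^{-}_{s},\dots,w^{+}_{s}\}$.

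First I would fix an arbitrary operation $j^{\prime}\in J^{*}$ and record that the nominal processing time $p_{j^{\prime}1}$ is non-negative (indeed, in the generated testbeds it is an integer in $\{2,\dots,15\}$, but only $p_{j^{\prime}1}\ge 0$ is used). Then, given two worker subsets $Q_{1},Q_{2}\subseteq W^{*}$ with $1\le |Q_{1}|\le |Q_{2}|$, I would write
\[
f_{j^{\prime}}(Q_{1})-f_{j^{\prime}}(Q_{2})=p_{j^{\prime}1}\left(\frac{1}{|Q_{1}|}-\frac{1}{|Q_{2}|}\right)=p_{j^{\prime}1}\cdot\frac{|Q_{2}|-|Q_{1}|}{|Q_{1}|\cdot|Q_{2}|}\ \ge\ 0,
\]
since $p_{j^{\prime}1}\ge 0$, $|Q_{2}|-|Q_{1}|\ge 0$, and the denominator $|Q_{1}|\cdot|Q_{2}|$ is strictly positive. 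Hence $f_{j^{\prime}}(Q_{1})\ge f_{j^{\prime}}(Q_{2})$, which is precisely the stated monotonicity in $|Q|$ and confirms property P.1, so that the malleable relaxation of Section~\ref{sec:malleable} and bound $(\text{LB}_{8})$ apply to this choice of $f_{j^{\prime}}$.

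There is no genuine obstacle here; the only point needing a word of care is that the denominator $|Q|$ never vanishes, which holds because any operation on stage $s$ is processed by at least $w^{-}_{s}\ge 1$ workers, so $|Q|\ge 1$ throughout the relevant range.
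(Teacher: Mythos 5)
Your proof is correct and follows essentially the same route as the paper's: both compare $f_{j^{\prime}}(Q_{1})$ and $f_{j^{\prime}}(Q_{2})$ for $1\le |Q_{1}|\le |Q_{2}|$ and conclude from $p_{j^{\prime}1}\ge 0$ and the reversal of inequalities under reciprocals. Your version merely spells out the difference computation and the non-vanishing of the denominator a bit more explicitly.
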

        \begin{proof}
            Let $W^{*}$ be the set of workers and $Q_1, Q_2$ two non-empty subsets $Q_1, Q_2\subseteq W^*$, where $|Q_1|\leq |Q_2|$.  Then $f_{j'}(Q_1) = \frac{p_{j'1}}{|Q_1|}\geq \frac{p_{j'1}}{|Q_2|} = f_{j'}(Q_2)$, and the lemma holds.
        \end{proof}

        \begin{lemma}
            The product $|Q|\cdot f_{j'}(Q)$ is non-decreasing on the number of allocated workers $|Q|$. \label{lem:2}
        \end{lemma}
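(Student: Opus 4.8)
The plan is to compute the product directly. Fix an operation $j' \in J^*$ and let $Q \subseteq W^*$ be any non-empty subset of workers. By the definition of the processing-time function used in the computational experiments, $f_{j'}(Q) = \frac{p_{j'1}}{|Q|}$, so that
\begin{equation}
|Q| \cdot f_{j'}(Q) = |Q| \cdot \frac{p_{j'1}}{|Q|} = p_{j'1}. \notag
\end{equation}
The right-hand side does not depend on $|Q|$ at all; it is the fixed nominal (single-worker) processing time of operation $j'$. Hence the map $|Q| \mapsto |Q| \cdot f_{j'}(Q)$ is constant, and in particular non-decreasing in $|Q|$, which is exactly property P.2.

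The remaining step is simply to record that this establishes the hypothesis needed to invoke the malleable-scheduling relaxation: together with Lemma \ref{lem:1}, which supplies P.1, the function $f_{j'}$ satisfies both P.1 and P.2, so Lemma \ref{lem:proof} applies and $C^*_{max}$ of (LP) is a valid makespan lower bound (this is bound $\text{LB}_8$). It is worth noting that here P.2 holds with equality rather than strict inequality: parallelisation under the chosen $f_{j'}$ incurs no coordination overhead, which is the extremal (``perfectly parallelisable'') case of the monotone-work assumption, and the weak inequality in P.2 is precisely what accommodates it.

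Honestly, there is no real obstacle in this lemma — the product telescopes to a constant in one line. The only thing to be careful about is the standing assumption $|Q| \geq 1$ (each operation is processed by at least $w^-_s \geq 1$ workers), so that the division by $|Q|$ is well defined and the cancellation is legitimate; this is already guaranteed by the problem data, since $w^-_s \geq 1$ for every stage $s$.
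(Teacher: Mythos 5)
Your proof is correct and follows exactly the paper's argument: the product $|Q|\cdot f_{j'}(Q)$ cancels to the constant $p_{j'1}$, which is trivially non-decreasing. The additional remarks about $|Q|\geq 1$ and the link to Lemma \ref{lem:proof} are fine but not needed for the lemma itself.
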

        \begin{proof}
            By construction, the product $|Q|\cdot f_{j'}(Q)$ is equal to $p_{j'1}$, thus constant and trivially non-decreasing.
        %    \noindent$|Q|\cdot f_{j'}(Q) = |Q|\cdot \frac{p_{j'1}}{|Q|} = p_{j'1}$
        \end{proof}
        \noindent By Lemma \ref{lem:1} and \ref{lem:2}, $\text{LB}_{8}$ is valid for the generated datasets.

    \subsection{Comparison between the CP and the LBBD}
        All 155 datasets (125 for the first group and 30 for the second group) are solved using both the $\mathcal{CP}$ model and the CP-oriented LBBD approach, with a time limit of 1,800 seconds. The performance of the two methods is evaluated based on the number of instances for which a feasible solution is found and the optimality gap at the time limit, primarily determined by the best upper bound, as CP is known to be inefficient at producing strong lower bounds.
        
        This inefficiency is also evident in the experimental results. We report the best lower bounds provided by each method individually, as well as the strongest lower bound among $\text{LB}_{1}$ to $\text{LB}_{8}$, which is computed once at the beginning of the solution process.
        
        \textbf{First group of instances. }Table \ref{tab:results} of the Appendix reports the detailed results, and table \ref{tab:average} summarises the average values per number of jobs.

        \begin{table}[H]
            \centering
            \caption{Average results per number of jobs}
            \label{tab:average}
            \resizebox{\textwidth}{!}{
            \begin{tabular}{|cc|ccccc|ccccc|}
            \hline
            \multirow{2}{*}{$|J|$} & \multirow{2}{*}{Best LB} & \multicolumn{5}{|c|}{$\mathcal{CP}$} & \multicolumn{5}{c|}{LBBD} \\ \cline{3-12} 
                                 &                          & LB    & UB     & Original Gap (\%) & Real Gap (\%) & Solved & LB    & UB     & Original Gap (\%) & Real Gap (\%) & Solved \\ \hline
            20                   & 44.00                       & 21.64 & 48.52  & 55.30        & 9.30     & 25     & 26.72 & 45.76  & 41.31        & 3.85     & 25     \\
            50                   & 43.56                    & 22.52 & 52.56  & 57.10        & 17.09    & 25     & 24.48 & 50.2   & 51.20        & 13.23    & 25     \\
            100                  & 52.12                    & 23.32 & 84.48  & 72.01        & 37.52    & 25     & 35.68 & 60.2   & 40.68        & 13.36    & 25     \\
            200                  & 75.92                    & 22.76 & 141.94 & 82.26        & 44.01    & 16     & 58.52 & 92.32  & 36.59        & 17.72    & 25     \\
            400                  & 123.28                   & -     & -      & -            & -        & 0      & 96.52 & 156.80 & 38.42        & 21.35    & 25 \\ \hline
            \end{tabular}
            }
        \end{table}

        Column `Best LB' reports the average of the best value among $\text{LB}_{1} - \text{LB}_{8}$. Columns `LB' indicate the original lower bounds of the methods, and `UB' report the best found upper bounds. `Original Gap' corresponds to the optimality gap which is provided directly by each method: $\text{Original Gap} = \frac{\text{UB} - \text{LB}}{\text{UB}}\%$. `Real Gap' replaces the original lower bound with the best bound among columns `Best LB' and `LB': $\text{Real Gap} = \frac{\text{UB} - max\{\text{Best LB}, \text{LB}\}}{\text{UB}}\%$.

        As observed, the $\mathcal{CP}$ model yields feasible solutions for all instances with 20 to 100 jobs and for most instances with 200 jobs (16 out of 25). However, it fails to provide any feasible solutions for the largest testbeds (400 jobs). The scalability challenge is also related to the lower bound: the average values of the initial lower bound remain similar across all instance sizes, despite the increasing number of jobs. Consequently, the initial optimality gap increases significantly - from 55.30\% for 20-job instances to 82.26\% for the largest solvable instances (200 jobs).

        In contrast, the LBBD approach produces feasible solutions for all 125 instances. Although the initial lower bounds remain weak, resulting in large gaps from optimality (ranging from 36.59\% to 51.20\%), the method improves these bounds as the instance size increases, resulting in more consistent performance across all scales. In terms of upper bounds, LBBD consistently outperforms the complete model. Notably, after introducing the tighter lower bounds $\text{LB}_{1}$ to $\text{LB}_{8}$, the resulting upper bounds for 20-job instances are near-optimal, with an average gap of just 3.85\%. For larger instances, the gap increases with problem size but remains relatively stable, ranging from 13.23\% to 21.35\%. While these values are not strictly near-optimal, they demonstrate the strong efficiency of the method given the complexity and scale of the problem.

        These observations are further illustrated by the values of the Real Gap for both methods across all 125 instances, as shown in Figure \ref{fig:plotLB}:

        \begin{figure}[H]
        \centering
        \caption{Gap (\%) values of all 125 instances}
        \begin{tikzpicture}
            \begin{axis}[
                height = 9cm,
                width = 15cm,
                ylabel={Gap (\%)},
                legend entries={$\mathcal{CP}$, LBBD},
                legend pos=north west,
                xtick=\empty,
                xmin = 0,
                xmax = 126,
                ymin = 0,
                ymax = 75,
                grid=both
            ]
            \addplot[color=blue, mark=*, only marks] coordinates {
                (1, 8.33) 	(2, 11.76) 	(3, 9.09) 	(4, 10.64) 	(5, 8.33) 	(6, 9.26) 	(7, 8.33) 	(8, 8.33) 	(9, 7.41) 	(10, 8.33) 	(11, 10.2) 	(12, 10.64) 	(13, 10) 	(14, 7.84) 	(15, 9.09) 	(16, 8.7) 	(17, 11.63) 	(18, 8.33) 	(19, 10.42) 	(20, 6.25) 	(21, 12) 	(22, 7.84) 	(23, 7.32) 	(24, 12.96) 	(25, 9.43) 	(26, 15.69) 	(27, 18.18) 	(28, 16.33) 	(29, 15.69) 	(30, 16.67) 	(31, 19.3) 	(32, 17.31) 	(33, 17.31) 	(34, 17.54) 	(35, 17.31) 	(36, 17.31) 	(37, 17.65) 	(38, 18.18) 	(39, 16.67) 	(40, 19.23) 	(41, 16) 	(42, 18.75) 	(43, 13.46) 	(44, 16.33) 	(45, 13.46) 	(46, 15.38) 	(47, 15.69) 	(48, 20) 	(49, 18.97) 	(50, 18.97) 	(51, 41.76) 	(52, 44.44) 	(53, 40.74) 	(54, 43.02) 	(55, 37.65) 	(56, 37.36) 	(57, 42.05) 	(58, 41.38) 	(59, 39.08) 	(60, 40.23) 	(61, 39.33) 	(62, 39.77) 	(63, 41.76) 	(64, 40) 	(65, 33.33) 	(66, 41.67) 	(67, 36.47) 	(68, 39.08) 	(69, 15.25) 	(70, 44.44) 	(71, 14.29) 	(72, 42.53) 	(73, 39.53) 	(74, 43.16) 	(75, 19.7) 	(76, 38.89) 	(77, 100) 	(78, 39.02) 	(79, 38.89) 	(80, 100) 	(81, 62.96) 	(82, 100) 	(83, 100) 	(84, 100) 	(85, 41.22) 	(86, 100) 	(87, 36) 	(88, 42.75) 	(89, 41.91) 	(90, 100) 	(91, 40.8) 	(92, 41.27) 	(93, 38.17) 	(94, 42.31) 	(95, 100) 	(96, 100) 	(97, 69.67) 	(98, 43.55) 	(99, 41.43) 	(100, 45.26)
            };
            \addplot[color=red, mark=*, only marks] coordinates {
                (1, 2.22) 	(2, 4.26) 	(3, 2.44) 	(4, 4.55) 	(5, 4.35) 	(6, 3.92) 	(7, 4.35) 	(8, 4.35) 	(9, 1.96) 	(10, 4.35) 	(11, 4.35) 	(12, 4.55) 	(13, 4.26) 	(14, 2.08) 	(15, 4.76) 	(16, 4.55) 	(17, 5) 	(18, 2.22) 	(19, 4.44) 	(20, 2.17) 	(21, 6.38) 	(22, 2.08) 	(23, 2.56) 	(24, 6) 	(25, 4) 	(26, 12.24) 	(27, 13.46) 	(28, 12.77) 	(29, 12.24) 	(30, 11.76) 	(31, 13.21) 	(32, 14) 	(33, 14) 	(34, 12.96) 	(35, 14) 	(36, 12.24) 	(37, 14.29) 	(38, 11.76) 	(39, 11.76) 	(40, 16) 	(41, 12.5) 	(42, 15.22) 	(43, 11.76) 	(44, 12.77) 	(45, 11.76) 	(46, 13.73) 	(47, 12.24) 	(48, 14.89) 	(49, 14.55) 	(50, 14.55) 	(51, 10.17) 	(52, 16.67) 	(53, 11.11) 	(54, 15.52) 	(55, 13.11) 	(56, 12.31) 	(57, 13.56) 	(58, 15) 	(59, 15.87) 	(60, 16.13) 	(61, 10) 	(62, 13.11) 	(63, 14.52) 	(64, 16.39) 	(65, 6.67) 	(66, 14.04) 	(67, 5.26) 	(68, 10.17) 	(69, 15.25) 	(70, 16.67) 	(71, 14.29) 	(72, 16.67) 	(73, 8.77) 	(74, 15.63) 	(75, 17.19) 	(76, 18.09) 	(77, 18.68) 	(78, 13.79) 	(79, 15.38) 	(80, 20.43) 	(81, 16.67) 	(82, 21.98) 	(83, 19.15) 	(84, 19.35) 	(85, 18.09) 	(86, 20.43) 	(87, 14.89) 	(88, 21.05) 	(89, 18.56) 	(90, 14.44) 	(91, 16.85) 	(92, 14.94) 	(93, 10) 	(94, 14.77) 	(95, 19.35) 	(96, 19.79) 	(97, 16.85) 	(98, 20.45) 	(99, 18) 	(100, 21.05) 	(101, 19.23) 	(102, 21.52) 	(103, 18.3) 	(104, 19.11) 	(105, 22.44) 	(106, 22.75) 	(107, 24.2) 	(108, 22.09) 	(109, 28.4) 	(110, 21.88) 	(111, 23.38) 	(112, 21.57) 	(113, 24.69) 	(114, 18.01) 	(115, 19.08) 	(116, 20.67) 	(117, 18.24) 	(118, 16.98) 	(119, 16.23) 	(120, 24.36) 	(121, 22.01) 	(122, 20.92) 	(123, 22.22) 	(124, 21.69) 	(125, 23.75) 

            };
            \end{axis}
        \end{tikzpicture}
        \label{fig:plotLB}
        \end{figure}
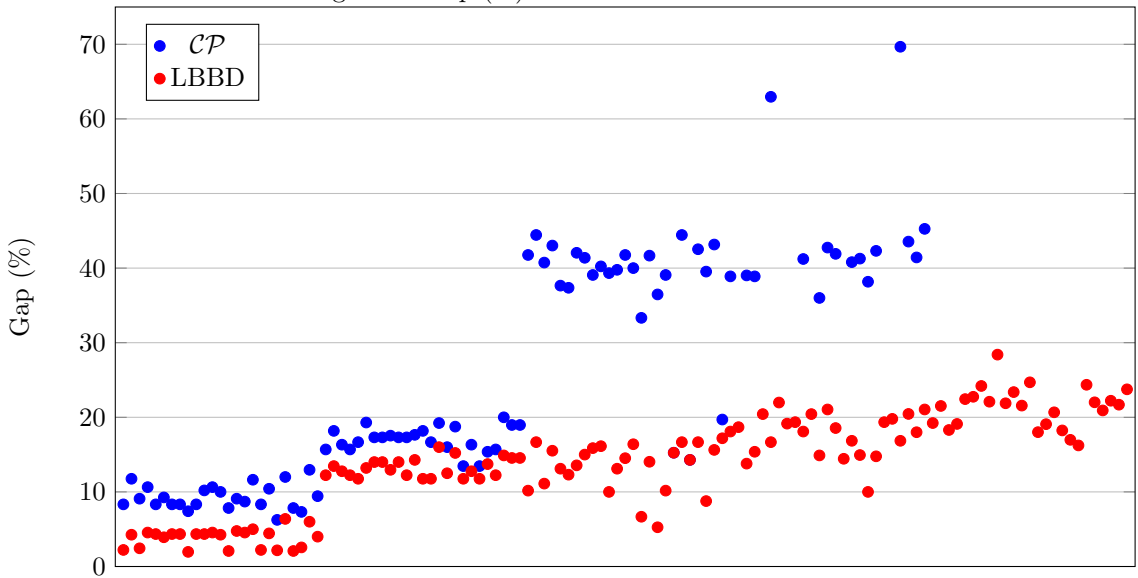

    Table \ref{tab:lb} highlights the impact of the introduced lower bounds ($\text{LB}_{1} - \text{LB}_{8}$) on the performance of both methods. The column labeled `Diff' reports the average relative improvement, calculated as $\frac{\text{Best LB} - \text{LB}}{\text{Best LB}}\%$.

    For the $\mathcal{CP}$ model, the impact of the introduced bounds becomes more pronounced as instance size increases. This is primarily due to the model's growing inefficiency in strengthening dual bounds for larger problems. In contrast, for the LBBD approach, the relative impact of the introduced bounds decreases with instance size. However, the improvement remains substantial across all testbed groups, with the smallest observed improvement still reaching 21.71\%.

    \begin{table}[H]
        \centering
        \caption{Impact of the introduced lower bounds}
        \label{tab:lb}
        \begin{tabular}{cccccc}
        \hline
        \multirow{2}{*}{$|J|$} & \multirow{2}{*}{Best LB} & \multicolumn{2}{c}{$\mathcal{CP}$} & \multicolumn{2}{c}{LBBD} \\ \cline{3-6}
            &        & LB       & Diff  & LB    & Diff  \\ \hline
        20  & 44.00  & 21.64    & 50.09 & 26.72 & 39.27 \\
        50  & 43.56  & 22.52     & 48.12 & 24.48 & 43.80 \\
        100 & 52.12  & 23.32 & 56.31 & 35.68 & 31.54 \\
        200 & 75.92  & 22.76    & 68.39 & 58.52 & 22.92 \\
        400 & 123.28 & -        & -     & 96.52 & 21.71 \\ \hline
        \end{tabular}
    \end{table}

    \textbf{Second group of experiments.}
        Table \ref{tab:second} summarises the results of the second group of experiments. Each instance is named using the format $|J|\_|S|\_\#$, where $|J|$ denotes the number of jobs, $|S|$ the number of stages, and $\#$ the variant (1 for two machines per stage, and 2 for one to three machines per stage). The $\mathcal{CP}$ model and LBBD produced identical upper bounds for 12 out of the 30 datasets, all corresponding to instances with 20 or 50 jobs. For the remaining datasets, $\mathcal{CP}$ failed to find any feasible solution for seven instances with 400 jobs but provided slightly better upper bounds than LBBD for three datasets. In terms of lower bounds, LBBD outperformed $\mathcal{CP}$ in 27 datasets; for the remaining three, both methods achieved equal values. The additional lower bounds incorporated in LBBD are critical, as reflected in the low values of the Real Gap (below 10\% for almost all datasets, including those with 400 jobs). This strongly suggests that the solutions provided by both methods are near-optimal.
        
        \begin{table}[H]
            \centering
            \caption{Results of the experiments of the second group}
            \label{tab:second}
            \resizebox{\textwidth}{!}{
            \begin{tabular}{|cc|cccc|cccc|}
            \hline
            \multirow{2}{*}{Instance} & \multirow{2}{*}{Best LB} & \multicolumn{4}{|c|}{$\mathcal{CP}$} & \multicolumn{4}{|c|}{LBBD}           \\ \cline{3-10}
                              &                          & LB & UB  & Original Gap (\%) & Real Gap (\%) & LB & UB  & Original Gap (\%) & Real Gap (\%) \\ \hline
            20\_2\_1  & 41   & 10  & 42  & 76.19 & 2.38  & 36   & 42   & 14.29 & 2.38  \\
            20\_2\_2  & 32   & 10  & 36  & 72.22 & 11.11 & 32   & 36   & 11.11 & 11.11 \\
            20\_3\_1  & 49   & 13  & 50  & 74.00 & 2.00  & 32   & 50   & 36.00 & 2.00  \\
            20\_3\_2  & 46   & 14  & 47  & 70.21 & 2.13  & 30   & 47   & 36.17 & 2.13  \\
            20\_4\_1  & 76   & 16  & 78  & 79.49 & 2.56  & 45   & 78   & 42.31 & 2.56  \\
            20\_4\_2  & 65   & 52  & 68  & 23.53 & 4.41  & 54   & 68   & 20.59 & 4.41  \\
            50\_2\_1  & 99   & 10  & 102 & 90.20 & 2.94  & 74   & 102  & 27.45 & 2.94  \\
            50\_2\_2  & 110  & 114 & 114 & 0.00  & 0.00  & 114  & 114  & 0.00  & 0.00  \\
            50\_3\_1  & 127  & 15  & 130 & 88.46 & 2.31  & 74   & 131  & 43.51 & 3.05  \\
            50\_3\_2  & 140  & 15  & 142 & 89.44 & 1.41  & 70   & 142  & 50.70 & 1.41  \\
            50\_4\_1  & 172  & 19  & 176 & 89.20 & 2.27  & 73   & 174  & 58.05 & 1.15  \\
            50\_4\_2  & 156  & 16  & 158 & 89.87 & 1.27  & 67   & 158  & 57.59 & 1.27  \\
            100\_2\_1 & 169  & 10  & 179 & 94.41 & 5.59  & 126  & 177  & 28.81 & 4.52  \\
            100\_2\_2 & 250  & 250 & 250 & 0.00  & 0.00  & 250  & 250  & 0.00  & 0.00  \\
            100\_3\_1 & 260  & 15  & 270 & 94.44 & 3.70  & 134  & 271  & 50.55 & 4.06  \\
            100\_3\_2 & 255  & 259 & 273 & 5.13  & 6.59  & 259  & 273  & 5.13  & 6.59  \\
            100\_4\_1 & 349  & 73  & -   & -     & -     & 141  & 363  & 61.16 & 3.86  \\
            100\_4\_2 & 343  & 267 & 368 & 27.45 & 6.79  & 268  & 368  & 27.17 & 6.79  \\
            200\_2\_1 & 349  & 10  & 376 & 97.34 & 7.18  & 264  & 375  & 29.60 & 6.93  \\
            200\_2\_2 & 349  & 10  & 372 & 97.31 & 6.18  & 175  & 370  & 52.70 & 5.68  \\
            200\_3\_1 & 499  & 15  & 532 & 97.18 & 6.20  & 260  & 542  & 52.03 & 7.93  \\
            200\_3\_2 & 479  & 186 & -   & -     & -     & 456  & 518  & 11.97 & 7.53  \\
            200\_4\_1 & 676  & 248 & -   & -     & -     & 268  & 714  & 62.46 & 5.32  \\
            200\_4\_2 & 683  & 192 & -   & -     & -     & 272  & 744  & 63.44 & 8.20  \\
            400\_2\_1 & 700  & 10  & 768 & 98.70 & 8.85  & 519  & 767  & 32.33 & 8.74  \\
            400\_2\_2 & 705  & 10  & 773 & 98.71 & 8.80  & 356  & 762  & 53.28 & 7.48  \\
            400\_3\_1 & 1007 & 12  & -   & -     & -     & 505  & 1119 & 54.87 & 10.01 \\
            400\_3\_2 & 1021 & 306 & -   & -     & -     & 1003 & 1092 & 8.15  & 6.50  \\
            400\_4\_1 & 1294 & 321 & -   & -     & -     & 501  & 1457 & 65.61 & 11.19 \\
            400\_4\_2 & 1381 & 341 & -   & -     & -     & 1044 & 1591 & 34.38 & 13.20 \\ \hline
            \end{tabular}
            }
        \end{table}

        The second group of experiments allows us to examine the impact of the number of stages on method performance. To this end, we compare the average `Original Gap' (i.e., without considering the additional bounds $\text{LB}_{1} - \text{LB}_{8}$) values of the two methods for each stage count. Table \ref{tab:stages} reports these averages, grouped by the number of stages $|S|$, along with the column `Solved', which indicates the number of datasets, from a total of 10 per number of stages, for which a feasible solution was obtained.

        For $\mathcal{CP}$, the average gaps for instances with three and four stages exclude unsolved datasets. In contrast, LBBD provided solutions for all datasets; therefore, two averages are reported: `Average Gap 1' presents the average gaps for only over datasets where both methods found feasible solutions, ensuring a fair comparison, and `Average Gap 2' presents the average gaps over all datasets. The observed differences in gaps are primarily due to the weaker lower bounds provided by $\mathcal{CP}$, whereas LBBD achieved more reliable bounds.

        \begin{table}[H]
            \centering
            \caption{Average original gaps per number of stages}
            \label{tab:stages}
            \begin{tabular}{|c|cc|ccc|}
            \hline
            \multirow{2}{*}{$|S|$} & \multicolumn{2}{|c|}{$\mathcal{CP}$} & \multicolumn{3}{|c|}{LBBD} \\ \cline{2-6}
            & Average Gap (\%) & Solved & Average Gap 1 (\%) & Average Gap 2 (\%) & Solved \\ \hline
            2                    & 72.51                 & 10         & 24.96              & 24.96              & 10     \\
            3                    & 74.12                 & 7          & 39.16              & 34.91              & 10     \\
            4                    & 61.91                 & 5          & 41.14              & 49.28              & 10 \\ \hline   
            \end{tabular}
        \end{table}

        We compare the Original Gaps of the two methods across the two machine allocation variants: Variant 1, with a fixed number of two machines per stage, and Variant 2, with a random number of machines between one and three. Table \ref{tab:machines} reports the average gaps (calculated only over datasets with feasible solutions) and the number of solved instances per variant (from a total of 15 per variant), similar to Table \ref{tab:stages}. The results reveal a substantial difference between the two variants: instances with one to three machines per stage (Variant 2) exhibit significantly smaller gaps compared to those with two machines per stage (Variant 1).

        \begin{table}[H]
            \centering
            \caption{Average gap per variant}
            \label{tab:machines}
            \begin{tabular}{|c|cc|ccc|}
            \hline
            \multirow{2}{*}{Variant} & \multicolumn{2}{|c|}{$\mathcal{CP}$} & \multicolumn{3}{|c|}{LBBD}                         \\ \cline{2-6}
                                     & Average Gap (\%)      & Solved     & Average Gap 1 (\%) & Average Gap 2 (\%) & Solved \\ \hline
            1                        & 89.06                 & 11         & 37.72              & 43.94              & 15     \\
            2                        & 52.17                 & 11         & 28.59              & 28.83              & 15 \\ \hline
            \end{tabular}
        \end{table}

        Tables \ref{tab:stages} and \ref{tab:machines} highlight how dataset attributes influence method performance. For $\mathcal{CP}$, increasing the number of stages reduces the likelihood of obtaining a solution within the time limit: the transition from two to three stages, and from three to four, leads to fewer feasible solutions (Table \ref{tab:stages}). In contrast, LBBD consistently solves all datasets. Conversely, the number of machines per stage does not affect feasibility, as both variants yield the same number of solved instances (Table \ref{tab:machines}).

        Datasets in Variant 2 include stages that create significant bottlenecks, as any stage with only one machine can substantially tighten the lower bound. An examination of Table \ref{tab:second} indicates that the observed differences in gaps (Table \ref{tab:machines}) are primarily due to higher lower bounds in Variant 2. This effect applies to both methods; however, LBBD demonstrates substantially superior performance compared to the $\mathcal{CP}$ model.
\section{Further work} \label{sec:conclusions}
%    Motivated by an industrial case involving a gear-motor manufacturer, 
This work examines flowshop scheduling by considering flexibility across stages and machines, as well as incorporating buffers, modeled as renewable resources. In addition to constraints already explored in the literature, the key challenge addressed here is the dependency of processing times on the number of workers allocated per job and machine. While a CP model can solve instances of substantial size, a CP-oriented LBBD algorithm demonstrates superior performance, yielding low and stable optimality gaps even for instances with hundreds of jobs. A critical factor in this performance is the generation of strong lower bounds through extensions of state-of-the-art dual bounds and problem reduction to a malleable job scheduling relaxation.

 The proposed formulations are adaptable to a range of industrial constraints, including machine or stage-specific limits on worker allocation, machine eligibility restrictions, and the inclusion of non-renewable resources. Moreover, the reduction to malleable scheduling applies under a broad class of processing time functions, thus enabling the transferability of our approach to additional industrial settings. In addition, our method could be used as a simulation tool to examine alternative line configurations and thus offer meaningful managerial insights.

From a methodological perspective, metaheuristics offer a promising prospect for handling increased problem scale and complexity, although our method already scales to daily instances of an actual plant. In particular, hybrid approaches that partition the problem, such as replacing the master problem with a fast metaheuristic to assign jobs to machines, could significantly enhance computational efficiency. All these alternative solution methods could require a detailed computational investigation, which in turn would benefit from the availability of additional benchmark instances.
    
%Further work on machine eligibility, non-renewable resources possibly per stage, alternative methods like metaheuristics of RL to improve the upper bounds or to scale further 1 the instance size. 

%Alternative speed functions that are both meaningful in practice and submodular.

%Experimentation with industrial datasets and impact assessment in real plant floors.

%Experimentation with varying problem parameters to investigate their impact on solution quality and on algorithmic performance, as this may extract also useful managerial insights.
\setstretch{1.1}
\printbibliography
\pagebreak
\section*{Appendix: Results}
    \begin{longtable}[c]{cccccccccc}
\caption{Detailed results of the experiments of the first group}
\label{tab:results} \\
\multirow{2}{*}{Instance} & \multirow{2}{*}{Best LB} & \multicolumn{4}{c}{$\mathcal{CP}$} & \multicolumn{4}{c}{LBBD}           \\
                          &                          & LB & UB  & Original Gap & Real Gap & LB & UB  & Original Gap & Real Gap \\
\endfirsthead
\endhead
20\_1   & 44  & 22 & 48  & 54.17 & 8.33  & 29  & 45  & 35.56 & 2.22  \\
20\_2   & 45  & 20 & 51  & 60.78 & 11.76 & 22  & 47  & 53.19 & 4.26  \\
20\_3   & 40  & 21 & 44  & 52.27 & 9.09  & 28  & 41  & 31.71 & 2.44  \\
20\_4   & 42  & 20 & 47  & 57.45 & 10.64 & 27  & 44  & 38.64 & 4.55  \\
20\_5   & 44  & 22 & 48  & 54.17 & 8.33  & 29  & 46  & 36.96 & 4.35  \\
20\_6   & 49  & 22 & 54  & 59.26 & 9.26  & 22  & 51  & 56.86 & 3.92  \\
20\_7   & 44  & 23 & 48  & 52.08 & 8.33  & 30  & 46  & 34.78 & 4.35  \\
20\_8   & 44  & 21 & 48  & 56.25 & 8.33  & 23  & 46  & 50.00 & 4.35  \\
20\_9   & 50  & 24 & 54  & 55.56 & 7.41  & 31  & 51  & 39.22 & 1.96  \\
20\_10  & 44  & 20 & 48  & 58.33 & 8.33  & 21  & 46  & 54.35 & 4.35  \\
20\_11  & 44  & 25 & 49  & 48.98 & 10.20 & 32  & 46  & 30.43 & 4.35  \\
20\_12  & 42  & 21 & 47  & 55.32 & 10.64 & 28  & 44  & 36.36 & 4.55  \\
20\_13  & 45  & 23 & 50  & 54.00 & 10.00 & 30  & 47  & 36.17 & 4.26  \\
20\_14  & 47  & 21 & 51  & 58.82 & 7.84  & 21  & 48  & 56.25 & 2.08  \\
20\_15  & 40  & 18 & 44  & 59.09 & 9.09  & 19  & 42  & 54.76 & 4.76  \\
20\_16  & 42  & 20 & 46  & 56.52 & 8.70  & 27  & 44  & 38.64 & 4.55  \\
20\_17  & 38  & 21 & 43  & 51.16 & 11.63 & 28  & 40  & 30.00 & 5.00  \\
20\_18  & 44  & 22 & 48  & 54.17 & 8.33  & 29  & 45  & 35.56 & 2.22  \\
20\_19  & 43  & 23 & 48  & 52.08 & 10.42 & 30  & 45  & 33.33 & 4.44  \\
20\_20  & 45  & 22 & 48  & 54.17 & 6.25  & 28  & 46  & 39.13 & 2.17  \\
20\_21  & 44  & 24 & 50  & 52.00 & 12.00 & 31  & 47  & 34.04 & 6.38  \\
20\_22  & 47  & 21 & 51  & 58.82 & 7.84  & 23  & 48  & 52.08 & 2.08  \\
20\_23  & 38  & 20 & 41  & 51.22 & 7.32  & 27  & 39  & 30.77 & 2.56  \\
20\_24  & 47  & 22 & 54  & 59.26 & 12.96 & 25  & 50  & 50.00 & 6.00  \\
20\_25  & 48  & 23 & 53  & 56.60 & 9.43  & 28  & 50  & 44.00 & 4.00  \\
50\_1   & 43  & 21 & 51  & 58.82 & 15.69 & 24  & 49  & 51.02 & 12.24 \\
50\_2   & 45  & 23 & 55  & 58.18 & 18.18 & 25  & 52  & 51.92 & 13.46 \\
50\_3   & 41  & 22 & 49  & 55.10 & 16.33 & 24  & 47  & 48.94 & 12.77 \\
50\_4   & 43  & 21 & 51  & 58.82 & 15.69 & 24  & 49  & 51.02 & 12.24 \\
50\_5   & 45  & 23 & 54  & 57.41 & 16.67 & 24  & 51  & 52.94 & 11.76 \\
50\_6   & 46  & 24 & 57  & 57.89 & 19.30 & 26  & 53  & 50.94 & 13.21 \\
50\_7   & 43  & 24 & 52  & 53.85 & 17.31 & 24  & 50  & 52.00 & 14.00 \\
50\_8   & 43  & 21 & 52  & 59.62 & 17.31 & 24  & 50  & 52.00 & 14.00 \\
50\_9   & 47  & 24 & 57  & 57.89 & 17.54 & 26  & 54  & 51.85 & 12.96 \\
50\_10  & 43  & 20 & 52  & 61.54 & 17.31 & 24  & 50  & 52.00 & 14.00 \\
50\_11  & 43  & 25 & 52  & 51.92 & 17.31 & 25  & 49  & 48.98 & 12.24 \\
50\_12  & 42  & 23 & 51  & 54.90 & 17.65 & 24  & 49  & 51.02 & 14.29 \\
50\_13  & 45  & 23 & 55  & 58.18 & 18.18 & 27  & 51  & 47.06 & 11.76 \\
50\_14  & 45  & 22 & 54  & 59.26 & 16.67 & 24  & 51  & 52.94 & 11.76 \\
50\_15  & 42  & 20 & 52  & 61.54 & 19.23 & 24  & 50  & 52.00 & 16.00 \\
50\_16  & 42  & 22 & 50  & 56.00 & 16.00 & 24  & 48  & 50.00 & 12.50 \\
50\_17  & 39  & 22 & 48  & 54.17 & 18.75 & 22  & 46  & 52.17 & 15.22 \\
50\_18  & 45  & 25 & 52  & 51.92 & 13.46 & 25  & 51  & 50.98 & 11.76 \\
50\_19  & 41  & 21 & 49  & 57.14 & 16.33 & 24  & 47  & 48.94 & 12.77 \\
50\_20  & 45  & 21 & 52  & 59.62 & 13.46 & 24  & 51  & 52.94 & 11.76 \\
50\_21  & 44  & 26 & 52  & 50.00 & 15.38 & 26  & 51  & 49.02 & 13.73 \\
50\_22  & 43  & 21 & 51  & 58.82 & 15.69 & 23  & 49  & 53.06 & 12.24 \\
50\_23  & 40  & 22 & 50  & 56.00 & 20.00 & 24  & 47  & 48.94 & 14.89 \\
50\_24  & 47  & 24 & 58  & 58.62 & 18.97 & 26  & 55  & 52.73 & 14.55 \\
50\_25  & 47  & 23 & 58  & 60.34 & 18.97 & 25  & 55  & 54.55 & 14.55 \\
100\_1  & 53  & 22 & 91  & 75.82 & 41.76 & 35  & 59  & 40.68 & 10.17 \\
100\_2  & 50  & 23 & 90  & 74.44 & 44.44 & 36  & 60  & 40.00 & 16.67 \\
100\_3  & 48  & 21 & 81  & 74.07 & 40.74 & 35  & 54  & 35.19 & 11.11 \\
100\_4  & 49  & 23 & 86  & 73.26 & 43.02 & 35  & 58  & 39.66 & 15.52 \\
100\_5  & 53  & 23 & 85  & 72.94 & 37.65 & 34  & 61  & 44.26 & 13.11 \\
100\_6  & 57  & 24 & 91  & 73.63 & 37.36 & 39  & 65  & 40.00 & 12.31 \\
100\_7  & 51  & 24 & 88  & 72.73 & 42.05 & 34  & 59  & 42.37 & 13.56 \\
100\_8  & 51  & 21 & 87  & 75.86 & 41.38 & 35  & 60  & 41.67 & 15.00 \\
100\_9  & 53  & 24 & 87  & 72.41 & 39.08 & 37  & 63  & 41.27 & 15.87 \\
100\_10 & 52  & 23 & 87  & 73.56 & 40.23 & 36  & 62  & 41.94 & 16.13 \\
100\_11 & 54  & 25 & 89  & 71.91 & 39.33 & 37  & 60  & 38.33 & 10.00 \\
100\_12 & 53  & 23 & 88  & 73.86 & 39.77 & 35  & 61  & 42.62 & 13.11 \\
100\_13 & 53  & 23 & 91  & 74.73 & 41.76 & 36  & 62  & 41.94 & 14.52 \\
100\_14 & 51  & 23 & 85  & 72.94 & 40.00 & 36  & 61  & 40.98 & 16.39 \\
100\_15 & 56  & 23 & 84  & 72.62 & 33.33 & 36  & 60  & 40.00 & 6.67  \\
100\_16 & 49  & 23 & 84  & 72.62 & 41.67 & 34  & 57  & 40.35 & 14.04 \\
100\_17 & 54  & 27 & 85  & 68.24 & 36.47 & 34  & 57  & 40.35 & 5.26  \\
100\_18 & 53  & 23 & 87  & 73.56 & 39.08 & 36  & 59  & 38.98 & 10.17 \\
100\_19 & 50  & 23 & 59  & 61.02 & 15.25 & 36  & 59  & 38.98 & 15.25 \\
100\_20 & 50  & 22 & 90  & 75.56 & 44.44 & 35  & 60  & 41.67 & 16.67 \\
100\_21 & 54  & 26 & 63  & 58.73 & 14.29 & 36  & 63  & 42.86 & 14.29 \\
100\_22 & 50  & 22 & 87  & 74.71 & 42.53 & 34  & 60  & 43.33 & 16.67 \\
100\_23 & 52  & 23 & 86  & 73.26 & 39.53 & 36  & 57  & 36.84 & 8.77  \\
100\_24 & 54  & 26 & 95  & 72.63 & 43.16 & 39  & 64  & 39.06 & 15.63 \\
100\_25 & 53  & 23 & 66  & 65.15 & 19.70 & 36  & 64  & 43.75 & 17.19 \\
200\_1  & 77  & 23 & 126 & 81.75 & 38.89 & 59  & 94  & 37.23 & 18.09 \\
200\_2  & 74  & -  & -   & -     & -     & 59  & 91  & 35.16 & 18.68 \\
200\_3  & 75  & 26 & 123 & 78.86 & 39.02 & 58  & 87  & 33.33 & 13.79 \\
200\_4  & 77  & 25 & 126 & 80.16 & 38.89 & 59  & 91  & 35.16 & 15.38 \\
200\_5  & 74  & -  & -   & -     & -     & 60  & 93  & 35.48 & 20.43 \\
200\_6  & 80  & 25 & 216 & 88.43 & 62.96 & 59  & 96  & 38.54 & 16.67 \\
200\_7  & 71  & -  & -   & -     & -     & 56  & 91  & 38.46 & 21.98 \\
200\_8  & 76  & -  & -   & -     & -     & 60  & 94  & 36.17 & 19.15 \\
200\_9  & 75  & -  & -   & -     & -     & 61  & 93  & 34.41 & 19.35 \\
200\_10 & 77  & 23 & 131 & 82.44 & 41.22 & 59  & 94  & 37.23 & 18.09 \\
200\_11 & 74  & -  & -   & -     & -     & 59  & 93  & 36.56 & 20.43 \\
200\_12 & 80  & 26 & 125 & 79.20 & 36.00 & 58  & 94  & 38.30 & 14.89 \\
200\_13 & 75  & 23 & 131 & 82.44 & 42.75 & 57  & 95  & 40.00 & 21.05 \\
200\_14 & 79  & 24 & 136 & 82.35 & 41.91 & 64  & 97  & 34.02 & 18.56 \\
200\_15 & 77  & -  & -   & -     & -     & 57  & 90  & 36.67 & 14.44 \\
200\_16 & 74  & 25 & 125 & 80.00 & 40.80 & 54  & 89  & 39.33 & 16.85 \\
200\_17 & 74  & 24 & 126 & 80.95 & 41.27 & 55  & 87  & 36.78 & 14.94 \\
200\_18 & 81  & 25 & 131 & 80.92 & 38.17 & 60  & 90  & 33.33 & 10.00 \\
200\_19 & 75  & 23 & 130 & 82.31 & 42.31 & 59  & 88  & 32.95 & 14.77 \\
200\_20 & 75  & -  & -   & -     & -     & 57  & 93  & 38.71 & 19.35 \\
200\_21 & 77  & -  & -   & -     & -     & 60  & 96  & 37.50 & 19.79 \\
200\_22 & 74  & 22 & 244 & 90.98 & 69.67 & 56  & 89  & 37.08 & 16.85 \\
200\_23 & 70  & 23 & 124 & 81.45 & 43.55 & 55  & 88  & 37.50 & 20.45 \\
200\_24                   & 82                       & 26 & 140 & 81.43        & 41.43    & 64 & 100 & 36.00        & 18.00    \\
200\_25 & 75  & 24 & 137 & 82.48 & 45.26 & 58  & 95  & 38.95 & 21.05 \\
400\_1  & 126 & -  & -   & -     & -     & 107 & 156 & 31.41 & 19.23 \\
400\_2  & 124 & -  & -   & -     & -     & 108 & 158 & 31.65 & 21.52 \\
400\_3  & 125 & -  & -   & -     & -     & 26  & 153 & 83.01 & 18.30 \\
400\_4  & 127 & -  & -   & -     & -     & 25  & 157 & 84.08 & 19.11 \\
400\_5  & 121 & -  & -   & -     & -     & 109 & 156 & 30.13 & 22.44 \\
400\_6  & 129 & -  & -   & -     & -     & 107 & 167 & 35.93 & 22.75 \\
400\_7  & 119 & -  & -   & -     & -     & 103 & 157 & 34.39 & 24.20 \\
400\_8  & 127 & -  & -   & -     & -     & 112 & 163 & 31.29 & 22.09 \\
400\_9  & 116 & -  & -   & -     & -     & 111 & 162 & 31.48 & 28.40 \\
400\_10 & 125 & -  & -   & -     & -     & 107 & 160 & 33.13 & 21.88 \\
400\_11 & 118 & -  & -   & -     & -     & 104 & 154 & 32.47 & 23.38 \\
400\_12 & 120 & -  & -   & -     & -     & 109 & 153 & 28.76 & 21.57 \\
400\_13 & 122 & -  & -   & -     & -     & 105 & 162 & 35.19 & 24.69 \\
400\_14 & 132 & -  & -   & -     & -     & 23  & 161 & 85.71 & 18.01 \\
400\_15 & 123 & -  & -   & -     & -     & 102 & 152 & 32.89 & 19.08 \\
400\_16 & 119 & -  & -   & -     & -     & 98  & 150 & 34.67 & 20.67 \\
400\_17 & 121 & -  & -   & -     & -     & 99  & 148 & 33.11 & 18.24 \\
400\_18 & 132 & -  & -   & -     & -     & 112 & 159 & 29.56 & 16.98 \\
400\_19 & 129 & -  & -   & -     & -     & 112 & 154 & 27.27 & 16.23 \\
400\_20 & 118 & -  & -   & -     & -     & 104 & 156 & 33.33 & 24.36 \\
400\_21 & 124 & -  & -   & -     & -     & 108 & 159 & 32.08 & 22.01 \\
400\_22 & 121 & -  & -   & -     & -     & 102 & 153 & 33.33 & 20.92 \\
400\_23 & 112 & -  & -   & -     & -     & 101 & 144 & 29.86 & 22.22 \\
400\_24 & 130 & -  & -   & -     & -     & 114 & 166 & 31.33 & 21.69 \\
400\_25 & 122 & -  & -   & -     & -     & 105 & 160 & 34.38 & 23.75
\end{longtable}
\end{document}